\tikzset{main node/.style={circle,draw,minimum size=0.3em,inner sep=0.5pt}}
\tikzset{small node/.style={circle, draw,minimum size=0.1cm,scale=0.3,fill}}
\tikzset{anchorbase/.style={>=To,baseline={([yshift=-0.5ex]current bounding box.center)}}}
\tikzset{->-/.style={decoration={markings, mark=at position 0.53 with {\arrow[-Latex]{>}}},postaction={decorate}}}
\theoremstyle{plain}
\newtheorem{theorem}{Theorem}[section]
\newtheorem{lemma}[theorem]{Lemma}
\newtheorem{prop}[theorem]{Proposition}
\theoremstyle{definition}
\newtheorem{defn}[theorem]{Definition}
\newtheorem{exa}[theorem]{Example}
\theoremstyle{remark}
\newtheorem{rmk}[theorem]{Remark}
\numberwithin{equation}{section}
\newcommand{\al}{\alpha}
\newcommand{\be}{\beta}
\newcommand{\Q}{{\mathbb Q}}
\newcommand{\R}{{\mathbb R}}
\newcommand{\Z}{{\mathbb Z}}
\newcommand{\C}{{\mathbb C}}
\newcommand{\xr}{x_r}
\newcommand{\xs}{x_s}
\newcommand{\ra}{\rightarrow}
\newcommand{\vnk}{V_{n,k}}
\newcommand{\vcnk}[2]{V_{#1,#2}}
\newcommand{\vtnk}[3]{V_{#1,#2,#3}}
\newcommand{\cnk}{{\mathcal C}_{n,k}}
\newcommand{\bnk}{{\mathcal B}_{n,k}}
\newcommand{\mnk}{{\mathcal M}_{n,k}}
\newcommand{\bcnk}[2]{{\mathcal B}_{#1,#2}}
\newcommand{\mcnk}[2]{{\mathcal M}_{#1,#2}}
\newcommand{\btnk}[3]{{\mathcal B}_{#1,#2,#3}}
\newcommand{\latnk}{{\Lambda_{n,k}}}
\newcommand{\thelab}{\lambda}
\newcommand{\pnkj}{{\Phi(n,k,j)}}
\newcommand{\psnkj}{{\Psi(n,k,j)}}
\newcommand{\Span}{\text{\rm Span}}
\newcommand{\exterior}[1]{\mathop{\mathpalette\exterior@{#1}}}
\newcommand{\exterior@}[2]{%
  \raisebox{\depth}{%
  \fontsize{\sf@size}{0}%
  \m@th
  $\ifx#1\displaystyle\textstyle\else#1\fi\bigwedge$}%
  ^{\mspace{-2mu}#2}%
  \kern-\scriptspace
}
\title{Positivity properties for spherical functions of maximal Young subgroups}
\author{R. M. Green}
\address{Department of Mathematics\\
University of Colorado Boulder, Campus Box 395\\
Boulder, Colorado\\
USA, 80309}
\email{rmg@colorado.edu}
\keywords{maximal Young subgroup, Gelfand pair, spherical function, canonical basis}
\subjclass[2020]{Primary: 20C30; Secondary: 05E10.}
\begin{document}
\maketitle

\begin{abstract}
Let $S_k \times S_{n-k}$ be a maximal Young subgroup of the symmetric group $S_n$.
We introduce a basis $\bnk$ for the coset space $S_n/S_k \times S_{n-k}$ that is naturally parametrized by 
the set of standard Young tableaux with $n$ boxes, at most two rows, and at most $k$ boxes in the second row.
The basis $\bnk$ has positivity properties that resemble those of a root system, and 
there is a composition series of the coset space in which each term is spanned by the basis elements that it contains. 
We prove that the spherical functions of the associated Gelfand pair are nonnegative linear combinations of the $\bnk$.
\end{abstract}

\vskip 20pt
\centerline{\bf Published in the Annals of Combinatorics}
\vskip 20pt

\section{Introduction}

A pair $(G, K)$ of finite groups is called a {\it Gelfand pair} if $K \leq G$ and the permutation representation of $G$ 
on the set of left cosets $X = G/K$ of $K$ in $G$ is multiplicity-free as a $\C G$-module. One source of examples of Gelfand
pairs arises from the action of a group $G$ on a finite metric space $(X, d)$. Such an action is said to be
{\it distance transitive} if for all $(x_1, y_1), (x_2, y_2) \in X \times X$, we have $d(x_1, y_1) = d(x_2, y_2)$ if 
and only if there exists a $g \in G$ satisfying $g(x_1) = x_2$ and $g(y_1) = y_2$; this condition implies that $G$ acts 
transitively as a group of isometries of $X$. If $K$ is the stabilizer of $x_0 \in X$ under a distance transitive action 
of $G$, then $(G, K)$ is a Gelfand pair \cite[Lemma 4.3.4, Example 4.3.7]{ceccherini08}. Furthermore, in this case, the 
number of orbits of $K$ on $X$ (i.e., the rank of $G$ acting on $X$ as a permutation group as in \cite[Definition 8.2.4]{green13}) 
is equal to the number of irreducible direct summands of the permutation module on the cosets $G/K$ 
\cite[Corollary 4.4.3 (iii)]{ceccherini08}.

The action of a Weyl group on the weights of a minuscule representation of a simple Lie algebra satisfies the conditions of
the previous paragraph with respect to Euclidean distance by \cite[Theorem 8.2.22 (ii)]{green13}, so each minuscule representation 
of a simple Lie algebra gives rise to a Gelfand pair. In this paper, we concentrate on the special case where the Lie algebra has 
type $A_{n-1}$, which means that the Gelfand pair $(G, K)$ is given by $(S_n, S_k \times S_{n-k})$ for some $0 < k < n$. We 
will assume that $n \geq 2$ throughout, and without loss of generality that $k \leq n/2$.
In this case, each left coset of $X = G/K$ can be naturally identified with a squarefree monomial of degree $k$ in the commuting 
indeterminates $x_1, x_2, \ldots, x_n$, where the action of $G$ is the natural action on subscripts, and where the identity coset 
$K$ is identified with the monomial $x_1x_2 \cdots x_k$.
If we denote the set of $\C$-valued functions on $X$ by $L(X)$, then $L(X)$ decomposes as a $\C G$-module into
a direct sum of pairwise nonisomorphic irreducible representations $$
L(X) \cong V_0 \oplus V_1 \oplus \cdots \oplus V_k
.$$  We will identify the vector space $L(X)$ with the linear span, $\vnk$, of
the squarefree monomials of degree $k$ in the commuting indeterminates $\{x_1, x_2, \ldots, x_n\}$. We will refer to both 
these versions of the coset basis as the {\it monomial basis}, and denote it by $\mnk$.

It follows from Frobenius reciprocity that each of the $V_j$ has a $1$-dimensional $K$-invariant submodule.
For each $0 \leq j \leq k$, the $j$-th spherical function $\pnkj \in L(X)$ is defined to be the element of this 
$1$-dimensional submodule that is normalized so that $\pnkj$ sends the identity coset $K=x_1x_2\cdots x_k$ to $1$. 
The value of the spherical function $\pnkj$ on a coset $gK$ turns out to be a function of the distance $d$ between $gK$ 
and $K$ in the natural metric on $X$. These spherical functions
are known explicitly \cite[Theorem 6.1.10]{ceccherini08} and are sometimes called dual Hahn polynomials. They have
applications to random walks and the Bernoulli--Laplace diffusion model \cite[\S3]{diaconis87}. We will not use the metric 
in this paper, and instead view the spherical functions $\pnkj$ as homogeneous polynomials 
of degree $k$. Because the irreducible representations of $S_n$ over $\C$ are defined over $\Q$, we will work over the field 
$\Q$ unless stated otherwise, but scalars can be extended if necessary.

If $B$ is a basis for an $F$-vector space $V$ with $F \leq \R$, we say that an element $v = \sum_{b \in B} \lambda_b b$ is 
{\it $B$-positive with coefficients $\{\lambda_b\}_{b \in B}$} if we have $\lambda_b \geq 0$ for all $b \in B$. The spherical 
functions $\pnkj$ are generally not $\mnk$-positive elements of $\vnk$, but in this paper, we will introduce a basis $\bnk$ for 
$\vnk$ with respect to which the spherical functions are $\bnk$-positive. To illustrate this, consider the case $n=4$ and $k=1$, 
where we have $\mcnk{4}{1}= \{x_1, x_2, x_3, x_4\}$ and $$
\bcnk{4}{1} = \{x_1 - x_2, x_2 - x_3, x_3 - x_4, x_3 + x_4\}
.$$ If we identify the $x_i$ with an orthonormal basis of $\R^n$, the latter basis corresponds to the basis of simple roots of 
type $D_4$, as in \cite[\S2.10]{humphreys90}. In this case, $\Phi(4,1,0)$ is both
$\mcnk{4}{1}$-positive and $\bcnk{4}{1}$-positive: $$
\Phi(4,1,0) =
x_1 + x_2 + x_3 + x_4 = (x_1 - x_2) 
+ 2(x_2 - x_3)
+ (x_3 - x_4)
+ 2(x_3 + x_4)
,$$ whereas $\Phi(4,1,1)$ is not $\mcnk{4}{1}$-positive, but is $\bcnk{4}{1}$-positive: $$
\Phi(4,1,1) = x_1
- \frac{1}{3} x_2
- \frac{1}{3} x_3
- \frac{1}{3} x_4
= (x_1 - x_2) 
+ \frac{2}{3} (x_2 - x_3)
+ \frac{1}{3} (x_3 - x_4)
.$$

The purpose of this paper is to study $\bnk$-positivity for arbitrary $n \geq 2$ and $0 < k \leq n/2$. We replace the 
root system of type $D_n$ by a generalization called $k$-roots. Using a suitable total order, we can construct 
a canonical basis $\bnk$ of $k$-roots analogous to the simple roots in the case $k=1$. The basis $\bnk$ is 
naturally parametrized by the set of lattice words in the alphabet $\{1, 2\}$ that have length $n$ and at most $k$ occurrences of 
$2$, or equivalently (see Remark \ref{rmk:tableaux}) by the set of standard Young tableaux with $n$ boxes that have at most two 
rows and at most $k$ boxes in the second row. The basis $\bnk$ may be constructed in other ways, for example by using 
Kazhdan--Lusztig theory (see Remark \ref{rmk:kl}), but the $k$-root approach has the advantage that it is easy to deal with 
computationally.

The results of this paper are largely self-contained, although the key result Proposition \ref{prop:reduction} is implicit in 
recent work of the author and T. Xu \cite{gx3} on the case $k=2$ in a much more general setting. In \cite{gx3}, a 
$k$-root is defined to be a symmetrized tensor
product of $k$ mutually orthogonal roots in the sense of Lie theory. The cases we study in this paper correspond to performing
this construction with a root system of type $D$, where convenient Euclidean coordinates are available. We therefore usually
dispense with the root system point of view, and instead think of $k$-roots as polynomials in these Euclidean coordinates.
It should be noted that the polynomials corresponding to certain pairs of orthogonal roots, such as $(x_1 - x_2)(x_1 + x_2)$, 
do not appear in the construction because they are not linear combinations of squarefree monomials, and such polynomials are not
counted as $k$-roots for the purposes of this paper. 

We develop the combinatorial tools needed to define and study the canonical basis $\bnk$ in sections \ref{sec:kroots} and 
\ref{sec:canbas}. Although the initial definition of $\bnk$ in Definition \ref{def:defects} may 
seem ad hoc, we will prove in Theorem \ref{thm:canbas} that $\bnk$ has a simple characterization as the set of positive 
$k$-roots that are minimal in the sense of being indecomposable into sums of other positive $k$-roots.
Section \ref{sec:main} explores some applications of $k$-roots to representation theory. Theorem \ref{thm:main}
gives a closed formula for the spherical functions in terms of $k$-roots; the formula does not involve the metric on 
the coset space $G/K$. We use this formula to prove that the main result of this paper, which is that the spherical functions 
are $\bnk$-positive. Theorem \ref{thm:revlat} 
gives a sufficient condition for a monomial basis element to be $\bnk$-positive. 

\section{k-roots}\label{sec:kroots}

Let $n \geq 2$ and $0 < k \leq n/2$ be integers, and let $\vnk$ be the $\Q$-vector space of dimension $\binom{n}{k}$ with basis 
consisting of the set $\mnk$ of all squarefree monomials of degree $k$ in the commuting indeterminates $\{x_1, x_2, \ldots, x_n\}$.

The set $\mnk$ can be ordered lexicographically, as follows. Let $I = \{i_1, i_2, \ldots, i_k\}$ and 
$J = \{j_1, j_2, \ldots, j_k\}$ be two distinct subsets of $\{1, 2, \ldots, n\}$ of size $k$, and let
$x_I = x_{i_1} x_{i_2} \cdots x_{i_k}$ and $x_J = x_{j_1} x_{j_2} \cdots x_{j_k}$ be the corresponding squarefree monomials.
If $t$ is the smallest element of the symmetric difference $I \ \Delta \ J$, then we say $x_I \prec x_J$ if $t \in I$, 
and $x_J \prec x_I$ if $t \in J$. (For example, we have $x_1 x_2 x_5 \prec x_1 x_3 x_4$, with $t=2$.)

We order the $\Q$-vector space $V_{n, k}$ by saying that a nonzero vector $v \in V_{n, k}$ satisfies $v > 0$ (respectively, $v < 0$)
if the lexicographically minimal monomial appearing in $v$ has positive (respectively, negative) coefficient. We then say that $v_1 < v_2$
if $v_2 - v_1 > 0$. This makes $V_{n, k}$ into a totally ordered vector space. (Note that we have $x_1 x_2 x_5 > x_1 x_3 x_4$ in this case.)

\begin{defn}\label{def:cnk}
Let $\cnk$ be the subset of $\vnk$ consisting of all elements of the form $$
\prod_{r = 1}^k (\pm x_{i_{2r-1}} \pm x_{i_{2r}})
,$$ where the signs are chosen independently and where the set $\{i_1, i_2, \ldots, i_{2k}\}$ is a set of $2k$ distinct indices 
from the set $\{1, 2, \ldots, n\}$. An element of $\cnk$ is called a {\it $k$-root}.
A $k$-root is called {\it positive} if it is positive in the ordering on $\vnk$, and {\it negative}
otherwise.
\end{defn}  

\begin{rmk}\label{rmk:scalars}
The definitions imply that if $\lambda$ is a scalar and $\al$ is a $k$-root, then $\lambda \al$ is also a $k$-root if and only if 
$\lambda = \pm 1$. Note that for each $k$-root $\alpha$, the factors $(\pm x_i \pm x_j)$ are well-defined up to order 
and multiplication by nonzero scalars, 
because they are the irreducible factors of the $k$-root in the unique factorization domain $\Q[x_1, x_2, \ldots, x_n]$.
\end{rmk}

\begin{lemma}\label{lem:normform}
A $k$-root is positive if and only if it can be written in the form $$
\prod_{r = 1}^k (x_{i_{2r-1}} \pm x_{i_{2r}})
,$$ where the indices satisfy $i_{2r-1} < i_{2r}$ for all $i$.
\end{lemma}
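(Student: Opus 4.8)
The plan is to prove both implications directly by examining how the lexicographic ordering on $\vnk$ interacts with the factored form of a $k$-root. First I would establish the key auxiliary fact: if $\alpha = \prod_{r=1}^k (x_{a_r} \pm x_{b_r})$ is a product of binomials on $2k$ distinct indices, each normalized so that $a_r < b_r$, then the lexicographically minimal monomial appearing in $\alpha$ is obtained by selecting from each factor the variable with the smaller index, namely $x_{a_1} x_{a_2} \cdots x_{a_k}$ (after reindexing so that $a_1 < a_2 < \cdots$, though strictly one just needs the product of the $x_{a_r}$), and that this monomial occurs with coefficient $+1$ regardless of the choice of signs, since expanding the product picks out the $+x_{a_r}$ term from each factor. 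I would verify minimality by a short argument: any monomial in the expansion is $\prod_r x_{c_r}$ with $c_r \in \{a_r, b_r\}$, and replacing any $c_r = b_r$ by $a_r$ strictly decreases the monomial in the $\prec$ order because $a_r < b_r$ and $a_r$ is not already present (the indices are distinct); hence $\prod_r x_{a_r}$ is the unique $\prec$-minimal monomial.

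Granting this, the forward direction is immediate: a positive $k$-root has, by definition, positive leading coefficient, and by Remark \ref{rmk:scalars} each irreducible factor is determined up to sign, so we may write $\alpha = \prod_{r=1}^k \varepsilon_r (x_{a_r} \pm x_{b_r})$ with $a_r < b_r$ and $\varepsilon_r \in \{\pm 1\}$. The leading coefficient is then $\prod_r \varepsilon_r$, which must equal $+1$; absorbing the sign, we may flip an even number of the $\varepsilon_r$ (or rather, note $\prod \varepsilon_r = 1$ forces the product of signs to be trivial) to rewrite $\alpha = \prod_{r=1}^k (x_{a_r} \pm x_{b_r})$ with all leading signs positive and $a_r < b_r$, which is the claimed normal form. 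Conversely, any element written in the form $\prod_{r=1}^k (x_{i_{2r-1}} \pm x_{i_{2r}})$ with $i_{2r-1} < i_{2r}$ is a $k$-root by Definition \ref{def:cnk}, and by the auxiliary fact its leading coefficient is $+1 > 0$, so it is positive.

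The main obstacle, and the place that needs the most care, is the auxiliary minimality claim — specifically handling the interaction between the ordering $\prec$ on monomials (which compares by the smallest index in the symmetric difference) and the multiplicativity of the factored form. One has to be careful that when comparing two monomials $\prod_r x_{c_r}$ and $\prod_r x_{c'_r}$ coming from different sign choices in the expansion, the indices $\{a_r, b_r\}$ across different $r$ may interleave in complicated ways, so "replace $b_r$ by $a_r$" must be justified as genuinely decreasing in $\prec$; the clean way is to observe that the symmetric difference of $\{c_1,\dots,c_k\}$ and $\{a_1,\dots,a_k\}$ consists of some $b_r$'s and the corresponding $a_r$'s, and its smallest element is one of the $a_r$'s (since $a_r < b_r$), so the all-$a$ monomial is $\prec$-smaller. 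I would also note in passing that distinctness of the $2k$ indices is exactly what prevents cancellation of the leading monomial, so the coefficient $+1$ survives. Everything else is routine bookkeeping with signs.
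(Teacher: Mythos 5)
Your proposal is correct and follows essentially the same route as the paper: normalize each irreducible factor to lead with the smaller index, pull out overall signs, and identify the lexicographically minimal monomial as $\prod_r x_{a_r}$ with coefficient $\pm 1$. The only difference is that you spell out the minimality argument (via the symmetric-difference criterion) that the paper asserts without elaboration, which is a welcome but inessential elaboration.
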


\begin{proof} 
Let $\al$ be a $k$-root. We can reorder each factor of $\al$ to be of the form $(\pm x_i \pm x_j)$ where $i < j$, and commute the negative
signs on $x_i$ to the front to obtain $$
\al = \pm \prod_{r = 1}^k (x_{i_{2r-1}} \pm x_{i_{2r}})
,$$ where $i_{2r-1} < i_{2r}$ for all $i$. 
The lexicographically minimal monomial appearing in $\al$ is $$
x_{i_1} x_{i_3} \cdots x_{i_{2k-1}}
,$$ which occurs with coefficient $1$ if $\al$ is in the form given in the statement, and with coefficient $-1$ if $-\al$ is in the form given
in the statement. The conclusion now follows.
\end{proof}

\begin{defn}
We say that a positive $k$-root is in {\it normal form} if it is in the form given by Lemma \ref{lem:normform}. If $\al$ is a 
positive $k$-root in normal form, then we call a factor of $\al$ {\it antisymmetric} if it is of the form $(x_i - x_j)$ for $i < j$, 
and {\it symmetric} if it is of the form $(x_i + x_j)$. If $x_i$ does not appear in the factorization of $\al$, then we say that $i$ 
is an {\it unused index} of $\al$.
\end{defn}

Note that the normal form of a $k$-root is unique up to reordering the factors.

In the next result, $\chi^{(n-i,i)}$ refers to the character of the irreducible $\C S_n$-module corresponding to the two part
partition $(n-i, i)$ of $n$, as in \cite[\S4]{fulton13}. Part (ii) is well known.

\begin{lemma}\label{lem:span}
\begin{itemize}
\item[{\rm (i)}]{The set $\cnk$ is a spanning set for $\vnk$ over $\Q$.}
\item[{\rm (ii)}]{The character of $\vnk$ as a $\C S_n$-module is $\sum_{i = 0}^k \chi^{(n-i, i)}$.}
\end{itemize}
\end{lemma}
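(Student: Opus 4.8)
The plan is to handle the two parts separately; part~(ii) is quoted from standard theory, while part~(i) needs a short direct argument that exploits the standing hypothesis $k \leq n/2$.

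For (i), I would show directly that every monomial in $\mnk$ lies in $\Span \cnk$; since $\mnk$ is a basis of $\vnk$, this is enough. Fix a squarefree monomial $x_{i_1} x_{i_2} \cdots x_{i_k}$. Because $k \leq n/2$, we may choose $k$ further indices $j_1, j_2, \ldots, j_k$ so that $i_1, \ldots, i_k, j_1, \ldots, j_k$ are $2k$ distinct elements of $\{1, 2, \ldots, n\}$. Applying the elementary identity $x_{i_r} = \tfrac{1}{2}\bigl((x_{i_r} + x_{j_r}) + (x_{i_r} - x_{j_r})\bigr)$ to each factor and multiplying, we obtain
$$
x_{i_1} x_{i_2} \cdots x_{i_k} = \frac{1}{2^k} \sum \prod_{r=1}^k (x_{i_r} \pm x_{j_r}),
$$
where the sum ranges over all $2^k$ independent choices of signs. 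Each summand is a product of the shape required by Definition~\ref{def:cnk} on $2k$ distinct indices, hence is a $k$-root, so $x_{i_1} x_{i_2} \cdots x_{i_k} \in \Span \cnk$ and (i) follows.

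For (ii), I would invoke the classical decomposition of the permutation module on $k$-subsets. Under the identification of $\vnk$ with $L(X)$ for $X = S_n/(S_k \times S_{n-k})$ described in the introduction, $\vnk$ is the $\C S_n$-module induced from the trivial module of the Young subgroup $S_k \times S_{n-k}$. Young's rule (see \cite[\S4]{fulton13}) then gives a direct sum decomposition $\vnk \cong \bigoplus_{i=0}^{k} S^{(n-i,i)}$ into Specht modules; since $0 \leq i \leq k \leq n/2$, each $(n-i,i)$ is a genuine partition of $n$, so each $S^{(n-i,i)}$ is irreducible over $\C$ with character $\chi^{(n-i,i)}$, and taking characters yields (ii). (This also recovers, and refines, the decomposition $L(X) \cong V_0 \oplus \cdots \oplus V_k$ recorded in the introduction.)

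I do not anticipate a genuine obstacle here: the one point that requires care is that the argument for (i) uses $n \geq 2k$ in an essential way — it is precisely what guarantees enough ``spare'' indices $j_1, \ldots, j_k$ to symmetrize each variable — and this is exactly the standing hypothesis $k \leq n/2$; part (ii) is standard and merely cited.
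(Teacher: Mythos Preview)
Your proof is correct and matches the paper's approach essentially line for line: part~(i) is proved by the same ``pick $k$ spare indices and write $2x_{i_r} = (x_{i_r}+x_{j_r})+(x_{i_r}-x_{j_r})$'' trick, and part~(ii) is the standard induced-module decomposition, with the paper phrasing it via the Pieri rule on Schur functions rather than Young's rule.
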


\begin{proof}
Let $x_I = x_{i_1}x_{i_2} \cdots x_{i_k}$ be an arbitrary monomial basis element. Because $k \leq n/2$, we can choose indices 
$j_1, j_2, \ldots, j_k$ so that the set $\{i_1, i_2, \ldots, i_k, j_1, j_2, \ldots, j_k\}$ has cardinality $2k$. By substituting 
$(x_{i_r}+x_{j_r})+(x_{i_r}-x_{j_r})$ for $2x_{i_r}$, the monomial $2^k x_I$ can be expressed as a sum of $2^k$ distinct
$k$-roots, and (i) follows.




Since $\vnk$ is induced from the trivial module of $S_k \times S_{n-k}$, its character corresponds to the product of the Schur
functions $s_{(n-k)}$ and $s_{(k)}$. The Pieri rule shows that we have $$
s_{(n-k)} s_{(k)} = \sum_{i = 0}^k s_{(n-i, i)}
,$$ which proves (ii).
\end{proof}

\begin{defn}\label{def:defects}
Let $\al \in \cnk$ be a positive $k$-root. We say that $\al$ has a {\it defect} if the normal form of $\al$
has any of the following features, where in each case we have $i < j < r < s$:
\begin{itemize}
    \item[(i)]{two factors of the form $(x_i \pm \xr)$ and $(x_j \pm \xs)$;}
    \item[(ii)]{a factor of the form $(x_i \pm \xs)$ and a symmetric factor of the form $(x_j + \xr)$;}
    \item[(iii)]{a factor of the form $(x_i \pm \xr)$, and an unused index $j$;}
    \item[(iv)]{a symmetric factor of the form $(x_i + x_j)$, and an unused index $r$.}
\end{itemize}
In case (i), we say that $\al$ has a {\it crossing}; in case (ii), we say that $(x_j + \xr)$ is a {\it nested symmetric factor}; in 
case (iii), we say that $j$ is a {\it nested unused index}; and in case (iv), we say that $(x_i + x_j)$ is an {\it obstructed
symmetric factor}.

We define $\bnk$ to be the set of positive $k$-roots with no defects.
\end{defn}

The next result is implicit in \cite[\S5]{gx3}, and will be very useful in the sequel.

\begin{prop}\label{prop:reduction}
Let $\al \in \cnk$ be a positive $k$-root, written in normal form, and suppose that $\al$ has a defect. Then $\al$ can be written
as a sum of positive $k$-roots that are strictly lower than $\al$ in the total order on $\vnk$, by replacing the factor(s) 
involved in the defect according to the following rules, where $i < j < r < s$ in all cases: 
\begin{align}
(x_i - \xr)(x_j - \xs) &\quad\longrightarrow\quad (x_i - x_j)(\xr - \xs) + (x_i - \xs)(x_j - \xr); \label{eq:1} \\
(x_i + \xr)(x_j - \xs) &\quad\longrightarrow\quad (x_i + x_j)(\xr - \xs) + (x_i + \xs)(x_j - \xr); \label{eq:2} \\ 
(x_i - \xr)(x_j + \xs) &\quad\longrightarrow\quad (x_i - x_j)(\xr + \xs) + (x_i + \xs)(x_j - \xr); \label{eq:3} \\
(x_i + \xr)(x_j + \xs) &\quad\longrightarrow\quad (x_i + x_j)(\xr + \xs) + (x_i - \xs)(x_j - \xr); \label{eq:4} \\
(x_i - \xs)(x_j + \xr) &\quad\longrightarrow\quad (x_i - x_j)(\xr + \xs) + (x_i + x_j)(\xr - \xs) + 
(x_i + \xs)(x_j - \xr); \label{eq:5} \\
(x_i + \xs)(x_j + \xr) &\quad\longrightarrow\quad (x_i - x_j)(\xr - \xs) + (x_i + x_j)(\xr + \xs) + 
(x_i - \xs)(x_j - \xr); \label{eq:6} \\
(x_i - \xr) &\quad\longrightarrow\quad (x_i - x_j) + (x_j - \xr); \label{eq:7} \\
(x_i + \xr) &\quad\longrightarrow\quad (x_i - x_j) + (x_j + \xr); \label{eq:8} \\
(x_i + x_j) &\quad\longrightarrow\quad (x_i - x_j) + (x_j - \xr) + (x_j + \xr). \label{eq:9}
\end{align}
The index $j$ in relations \ref{eq:7} and \ref{eq:8} and the index $r$ in relation \ref{eq:9} are assumed to be
unused indices of $\al$, and they may be chosen arbitrarily subject to the constraint that $i < j < r$.
\end{prop}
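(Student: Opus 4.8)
The plan is to verify each of the nine rewriting rules \eqref{eq:1}--\eqref{eq:9} by two separate checks: (a) that the displayed identity holds as an identity of polynomials in $\Q[x_1,\ldots,x_n]$, and (b) that each $k$-root appearing on the right-hand side is strictly lower than $\al$ in the total order on $\vnk$. For (a), observe that in rules \eqref{eq:1}--\eqref{eq:6} both sides are quadratic in the four variables $x_i,x_j,\xr,\xs$ (the remaining $k-2$ factors of $\al$ are common to every term and can be stripped off), so each is a routine expansion: for instance the right side of \eqref{eq:1} expands to $x_ix_r - x_ix_s - x_jx_r + x_jx_s + x_ix_j - x_ix_r - x_jx_s + x_rx_s$, wait---one must be careful---so I would simply expand both sides symbolically and match coefficients of the six monomials $x_ax_b$. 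Rules \eqref{eq:7}--\eqref{eq:9} are even easier, being linear (resp.\ a linear times a common factor) identities; e.g.\ $(x_i - \xr) = (x_i - x_j) + (x_j - \xr)$ is immediate. Since the factor $j$ (in \eqref{eq:7}, \eqref{eq:8}) or $r$ (in \eqref{eq:9}) is an unused index of $\al$, multiplying through by the common factors of $\al$ preserves the squarefree-monomial property, so each term on the right genuinely lies in $\cnk$, and one checks it is in normal form after reordering (using $i<j<r<s$) and recording which factors are symmetric/antisymmetric.

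For part (b), the total order on $\vnk$ is governed by the lexicographically minimal monomial, and by Lemma \ref{lem:normform} the minimal monomial of a positive $k$-root in normal form $\prod_r (x_{i_{2r-1}} \pm x_{i_{2r}})$ is $x_{i_1}x_{i_3}\cdots x_{i_{2k-1}}$, the product of the smaller index from each factor. So to show a term $\be$ on the right-hand side satisfies $\be < \al$, I compare the multiset of ``smaller indices'' of $\be$ with that of $\al$: the common factors contribute the same indices, so it suffices to compare the smaller indices coming from the two (or one) rewritten factors. In every rule, the defect configuration means $\al$'s rewritten factors contribute smaller indices $\{i,j\}$ (cases (i), (ii) with the symmetric factor $(x_j+\xr)$ having smaller index $j$) or $\{i\}$ together with the absence of $j$ or $r$; after rewriting, the smaller indices strictly increase in the lex sense---e.g.\ in \eqref{eq:7} the single factor $(x_i - \xr)$ with smaller index $i$ is replaced by $(x_i - x_j)(x_j - \xr)$, but wait, that changes the number of factors. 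Here I should be more careful: relations \eqref{eq:7}--\eqref{eq:9} turn one factor into a product of two factors times a sum, so the degree bookkeeping is that $\al$ has an unused index which becomes used; the minimal monomial of each resulting $k$-root gains the index $j$ (or $r$) but this only happens at a position that was previously ``filled'' by the unused-index slot. The cleanest way to argue strict decrease uniformly is: in each rule, write $\al$'s minimal monomial $m_\al$ explicitly, write each right-hand term's minimal monomial, and check $m_{\be} \succ m_\al$ (recalling $x_I \prec x_J$ iff the smallest element of $I\,\Delta\,J$ lies in $I$), so that $\be < \al$.

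The main obstacle I anticipate is not any single computation but the sheer bookkeeping: there are nine rules, each with two to three terms on the right, and for each term one must (i) confirm it is a genuine positive $k$-root in normal form, (ii) identify its minimal monomial, and (iii) verify the strict-decrease inequality---while keeping track of which indices among $\{i,j,r,s\}$ and the unused index are the ``smaller'' ones in each factor, a matter complicated by the interleaving orders like $i<j<r<s$ versus the nesting patterns in defects (ii), (iii), (iv). I would organize this as a table indexed by the nine rules. A secondary subtlety is relation \eqref{eq:9}, which is the only one producing three factors from a symmetric factor plus an unused index and where one must simultaneously eliminate both the obstructed-symmetric defect and not introduce a worse one; here I would double-check that $(x_i - x_j)$, $(x_j - \xr)$, $(x_j + \xr)$ are pairwise in distinct variables---true since $i<j<r$---and that the resulting $k$-root uses exactly the index set $\{i,j,r\} \cup (\text{indices of the untouched factors})$, which has the right cardinality $2k$ because $r$ was unused. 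Finally, I would remark that the strict decrease guarantees the rewriting process terminates, though termination itself is not asserted in the proposition statement and so needs no separate argument here.
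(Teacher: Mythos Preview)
Your plan for part (a) is fine and matches the paper: the nine identities are routine polynomial checks, and after reordering, each factor on the right is in normal form, so each term is a positive $k$-root.

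Your plan for part (b), however, has a real gap. You propose to show $\be_p < \al$ by comparing the lexicographically minimal monomials of $\be_p$ and $\al$, using the observation that this monomial is the product of the smaller index from each factor. But in \emph{every} one of rules \eqref{eq:1}--\eqref{eq:8} there is a term on the right whose pair of smaller indices is exactly $\{i,j\}$ (for \eqref{eq:1}--\eqref{eq:6}) or $\{i\}$ (for \eqref{eq:7}--\eqref{eq:8}), identical to those of $\al$. For example, in \eqref{eq:1} the second term $(x_i - \xs)(x_j - \xr)$ has smaller indices $i$ and $j$, so its minimal monomial equals that of $\al$; your ``check $m_\be \succ m_\al$'' would simply fail here. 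The order on $\vnk$ is defined via the sign of the difference, not via a comparison of leading monomials, and when the leading monomials coincide with the same coefficient you would have to look at lower-order terms---a case analysis you have not planned for and which becomes unpleasant.

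The paper bypasses this entirely with a one-line argument that you are missing: once you know the identity $\al = \sum_{p=1}^m \be_p$ with $m \geq 2$ and each $\be_p$ a positive $k$-root, then for each fixed $p$,
\[
\al - \be_p \ = \ \sum_{q \ne p} \be_q,
\]
which is a nontrivial sum of positive elements of $\vnk$ and hence positive (a sum of positive vectors is positive: the overall lex-minimal monomial in the sum has a coefficient that is a sum of nonnegative contributions, at least one strictly positive). By definition of the order this gives $\be_p < \al$. No monomial bookkeeping is needed.
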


\begin{proof}
If $\al$ has a defect, then the normal form factorization of $\al$ must contain the factor(s) on the left hand side of one of
these nine identities, as follows. If $\al$ has a crossing, then one of the relations \ref{eq:1}, \ref{eq:2}, \ref{eq:3}, or 
\ref{eq:4} is applicable. If $\al$ has a nested symmetric factor, then one of the relations \ref{eq:5} or \ref{eq:6} is 
applicable. If $\al$ has a nested unused index, then one of the relations \ref{eq:7} or \ref{eq:8} is applicable. Finally, if
$\al$ has an obstructed symmetric factor, then relation \ref{eq:9} is applicable.

A routine verification shows that in each of the nine cases in the statement, the polynomial on the left hand side is equal to the
polynomial on the right hand side. By inspection, all of the factors appearing are of the correct type to appear in a normal
form factorization. It follows that making the substitutions indicated will express the normal form of the positive $k$-root $\al$
as a sum of other positive $k$-roots that are also in normal form.

It remains to show that if we have $\al = \sum_{p=1}^m \be_p$ as above, where the $\be_p$ are positive $k$-roots and $m > 1$, then 
we have $\be_p < \al$ for all $p$. This follows from the definition of the total order on $\vnk$, because we have $$
\al - \be_p = \sum_{q : 1 \leq q \leq m, q \ne p} \be_q
.$$ The right hand side is positive because it is a nontrivial sum of positive elements of $\vnk$, which means that
$\al - \be_p > 0$. By definition, this means that we have $\be_p < \al$, as required.
\end{proof}

\begin{rmk}\label{rmk:skein}
It may be convenient to visualize the relations in Proposition \ref{prop:reduction} as (singular) skein relations, in which the
factors of the form $(x_i - x_j)$ (respectively, $(x_i + x_j)$) are represented by an undecorated (respectively, decorated)
arc from $i$ to $j$. Figure \ref{fig:skein} shows the pictorial version of relation \ref{eq:6}, and Remark \ref{rmk:kl} gives
some more details on the relationship between $k$-roots and diagram algebras.   
\end{rmk}


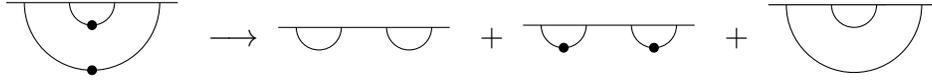
\begin{figure}[ht!]
    \centering
    \begin{tikzpicture}[anchorbase]      
        \draw (0,0) arc(-180:0:0.75*1.2) to (1.5*1.2,0);
        \draw (0.5*1.2,0) arc(-180:0:0.25*1.2) to (1*1.2,0);
        \draw (-0.2*1.2,0)--(1.7*1.2,0);
        \node[small node] at (0.75*1.2,-0.25*1.2) (a) {};
        \node[small node] at (0.75*1.2,-0.75*1.2) (b) {};
    \end{tikzpicture}
    \ \;$\longrightarrow$\;\
    \begin{tikzpicture}[anchorbase]
        \draw (0,0) arc(-180:0:0.25*1.2) to (0.5*1.2,0);
        \draw (1*1.2,0) arc(-180:0:0.25*1.2) to (1.5*1.2,0);
        \draw (-0.2*1.2,0)--(1.7*1.2,0);
    \end{tikzpicture}
    \ \;$+$\;\
        \begin{tikzpicture}[anchorbase]
        \draw (0,0) arc(-180:0:0.25*1.2) to (0.5*1.2,0);
        \draw (1*1.2,0) arc(-180:0:0.25*1.2) to (1.5*1.2,0);
        \draw (-0.2*1.2,0)--(1.7*1.2,0);
        \node[small node] at (0.25*1.2,-0.25*1.2) (a) {};
        \node[small node] at (1.25*1.2,-0.25*1.2) (b) {};
    \end{tikzpicture}
    \ \;$+$\;\
    \begin{tikzpicture}[anchorbase]
        \draw (0,0) arc(-180:0:0.75*1.2) to (1.5*1.2,0);
        \draw (0.5*1.2,0) arc(-180:0:0.25*1.2) to (1.1*1.2,0);
        \draw (-0.2*1.2,0)--(1.7*1.2,0);
    \end{tikzpicture}
    \caption{Relation \ref{eq:6} interpreted as a skein relation}
\label{fig:skein}
    \end{figure}

\begin{lemma}\label{lem:spanning}\ 
\begin{itemize}
    \item[{\rm (i)}]{Every positive $k$-root in $\cnk$ can be written as a linear combination of elements of $\bnk$ with
    nonnegative integer coefficients.}
    \item[{\rm (ii)}]{The set $\bnk$ is a spanning set for $\vnk$ over $\Q$, and we have $|\bnk| \geq \binom{n}{k}$.}
\end{itemize}  
\end{lemma}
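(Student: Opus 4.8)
The plan is to derive both parts of Lemma \ref{lem:spanning} from Proposition \ref{prop:reduction} together with Lemma \ref{lem:span}(i), by an induction on the total order on $\vnk$.

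For part (i), I would first observe that $\cnk$ is a finite set, since a $k$-root is determined by a choice of $2k$ distinct indices in a fixed order together with $k$ signs. Consequently the total order on $\vnk$, restricted to the positive $k$-roots, is a well-order, so I can argue by strong induction: assuming that every positive $k$-root strictly below $\al$ is a nonnegative integer linear combination of elements of $\bnk$, I prove the same for $\al$. If $\al$ has no defect, then $\al \in \bnk$ by Definition \ref{def:defects} and $\al = 1 \cdot \al$ is the desired expression; this also covers the base case, since the $\prec$-minimal positive $k$-root cannot have a defect (a defect would, via Proposition \ref{prop:reduction}, exhibit it as a sum of two or more strictly smaller positive $k$-roots, which do not exist). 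If $\al$ has a defect, then Proposition \ref{prop:reduction} yields $\al = \sum_{p=1}^m \be_p$ with each $\be_p$ a positive $k$-root satisfying $\be_p < \al$; the inductive hypothesis expresses each $\be_p$ as a nonnegative integer combination of elements of $\bnk$, and summing these gives the required expression for $\al$.

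For part (ii), I would combine part (i) with Lemma \ref{lem:span}(i). Lemma \ref{lem:normform} shows that every negative $k$-root is $-1$ times a positive $k$-root, so part (i) implies that every element of $\cnk$ lies in the $\Q$-span of $\bnk$; since $\cnk$ spans $\vnk$ by Lemma \ref{lem:span}(i), so does $\bnk$. Finally, $\vnk$ has dimension $\binom{n}{k}$ over $\Q$, and a spanning set of a finite-dimensional vector space has at least as many elements as the dimension, so $|\bnk| \geq \binom{n}{k}$.

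I do not expect a genuine obstacle here: all of the substantive work is carried out in Proposition \ref{prop:reduction}, and what remains is bookkeeping. The only points needing a little care are verifying that the induction is well-founded (which is why I record that $\cnk$ is finite) and, in part (ii), remembering to reduce from arbitrary $k$-roots to positive ones via Lemma \ref{lem:normform} before invoking the spanning statement.
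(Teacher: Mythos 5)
Your argument is correct and is essentially the paper's own proof: part (i) by repeated application of Proposition \ref{prop:reduction} with termination guaranteed by finiteness of $\cnk$ (you phrase this as a well-founded induction on the total order, the paper as a terminating rewriting process, but these are the same idea), and part (ii) by combining (i) with Lemma \ref{lem:span} and the dimension count. Your explicit handling of negative $k$-roots via Lemma \ref{lem:normform} is a small but genuine tidying-up of a step the paper leaves implicit.
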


\begin{proof}
To prove (i), suppose that $\al$ is a positive $k$-root. If we have $\al \in \bnk$, then we are done, so suppose that $\al$ has
a defect. We then apply Proposition \ref{prop:reduction} repeatedly to $\al$, applying the nine types of reduction in any order. 
This process must eventually terminate because the poset $\cnk$ is finite, and it will result in an expression for $\al$ as a 
positive integral linear combination of elements of $\bnk$.

The first assertion of (ii) follows from (i) and Lemma \ref{lem:span}, and the second assertion holds because $\vnk$ has
dimension $\binom{n}{k}.$
\end{proof}

\begin{rmk}\label{rmk:signature}
If $1 \leq i < j \leq n$ are integers, then the {\it signed transposition} $\overline{(i, j)}$ of the $2n$ symbols $\{\pm x_1, 
\pm x_2, \ldots, \pm x_n\}$ is the permutation that sends $\pm x_i$ to $\mp x_j$, and fixes $\pm x_r$ for $r \ne i, j$.
For each $k$-root $\al$, there is a natural way to assign a transposition $(i, j)$ to each antisymmetric factor $(x_i - x_j)$
and a signed transposition $\overline{(i, j)}$ to each symmetric factor $(x_i + x_j)$ 
to give a set $\{t_1, t_2, \ldots, t_k\}$ of distinct, mutually commuting signed and
unsigned transpositions with the property that $t_i(\al) = -\al$ for all $i$. Up to multiplying by nonzero scalars, the vector 
$\al \in \vnk$ can be characterized as the unique common eigenvector of the $\{t_1, t_2, \ldots, t_k\}$ with common 
eigenvalue $-1$.
\end{rmk}

\section{The canonical basis}\label{sec:canbas}

The main result of this section is Theorem \ref{thm:canbas}, which proves that the set $\bnk$ of $k$-roots without defects 
is a $\Q$-basis for $\vnk$, and that $\bnk$ can naturally be parametrized by a certain set of lattice words.
Recall that a {\it lattice word} is a sequence $a_1 a_2 \cdots a_n$ of positive integers with the property that for each 
positive integer $i$, each initial segment of the sequence contains at least as many occurrences of $i$ as of $i+1$. 

\begin{defn}\label{def:thelabal}
Let $\al \in \bnk$ be a positive $k$-root with no defects. We define the {\it label}, $\thelab(\al)$, of $\al$ to be the 
word of length $n$ in the alphabet $\{1, 2\}$ with the property that $\thelab(\al)_j = 2$ if and only if the normal form of
$\al$ has a factor of the form $(x_i - x_j)$ for some $i < j$. We define $\latnk$ to be the set of lattice words of length $n$ 
that have entries in the set $\{1, 2\}$, and at most $k$ occurrences of $2$.
\end{defn}

\begin{rmk}\label{rmk:tableaux}
The set $\latnk$ is in canonical bijection with the set of all standard Young tableaux with $n$ boxes having at most two rows
and at most $k$ boxes in the second row. The positions of the occurrences of $2$ in the lattice word correspond to the labels
of the boxes in the second row of the tableau.
\end{rmk}

Because each negative term $-x_j$ in the normal form is paired with a distinct term $x_i$ with $i < j$, the following result
follows immediately.

\begin{lemma}\label{lem:immed}
If $\al \in \bnk$ is a positive $k$-root with no defects, then we have $\thelab(\al) \in \latnk$.
\qed\end{lemma}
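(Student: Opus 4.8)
The plan is to check directly that the word $\thelab(\al)$ satisfies the four defining conditions of $\latnk$ from Definition \ref{def:thelabal}: it has length $n$, it has entries in $\{1,2\}$, it contains at most $k$ occurrences of the letter $2$, and it is a lattice word. The first two conditions are immediate from the definition of $\thelab(\al)$. For the bound on the number of $2$'s, I would observe that a position $j$ contributes a $2$ to $\thelab(\al)$ exactly when the normal form of $\al$ has an antisymmetric factor $(x_i - x_j)$ with $i < j$; since $\al$ has exactly $k$ factors in total (Definition \ref{def:cnk}), there are at most $k$ such positions.

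The substantive point is the lattice word condition, and the key observation there is that the $2k$ indices appearing in the normal form of $\al$ are pairwise distinct, so each index occurs in at most one factor. I would fix $m$ with $1 \le m \le n$ and, for each position $j \le m$ with $\thelab(\al)_j = 2$, let $i = i(j)$ be the unique index with $(x_i - x_j)$ a factor of $\al$; then $i < j \le m$. Because $i$ already occurs as the left-hand index of $(x_i - x_j)$, it cannot be the right-hand index of any antisymmetric factor, so $\thelab(\al)_i = 1$. Moreover the assignment $j \mapsto i(j)$ is injective, since distinct antisymmetric factors have distinct left-hand indices. This exhibits an injection from $\{j \le m : \thelab(\al)_j = 2\}$ into $\{i \le m : \thelab(\al)_i = 1\}$, so every initial segment of $\thelab(\al)$ contains at least as many $1$'s as $2$'s, which is exactly the lattice word condition for the alphabet $\{1, 2\}$.

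I do not expect a real obstacle: the argument uses only the normal form of a positive $k$-root (Lemma \ref{lem:normform}) together with the distinctness of its indices, and does not in fact invoke the absence of defects. The only point requiring a little care is making the pairing $j \mapsto i(j)$ explicit and checking that it is injective, which is precisely the content of the sentence preceding the statement.
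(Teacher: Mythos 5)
Your proof is correct and takes essentially the same approach as the paper's: the paper's one-sentence justification (``each negative term $-x_j$ in the normal form is paired with a distinct term $x_i$ with $i<j$'') is precisely the injective pairing $j \mapsto i(j)$ that you make explicit, together with the observation that each $i(j)$ carries label $1$. Your remark that the argument does not need the absence of defects is also accurate, though harmless here since the label $\thelab$ is only defined on $\bnk$.
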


\begin{defn}
Let $\al \in \cnk$ be a positive $k$-root. We define the {\it height}, $h(\al)$, of $\al$ to be the number of symmetric
factors appearing in the normal form of $\al$. We define $\btnk{n}{k}{h}$ to be the subset of $\bnk$ consisting of $k$-roots
of height $h$.
\end{defn}

Note that if $\al \in \cnk$ then we always have $0 \leq h(\al) \leq k$.

\begin{lemma}\label{lem:deletion}\ 
\begin{itemize}
\item[{\rm (i)}]{There is a function $f : \btnk{n}{k}{h} \ra \btnk{n}{k-h}{0}$, where $f(\al)$ is defined to be the homogeneous
polynomial of degree $k-h$ obtained by removing the $h = h(\al)$ symmetric factors from the normal form of $\al$.}
\item[{\rm (ii)}]{The label $\thelab(\al)$ has $k-h(\al)$ occurrences of $2$, and satisfies $\thelab(\al) = \thelab(f(\al))$.}
\item[{\rm (iii)}]{The function $f$ is injective.}
\end{itemize}
\end{lemma}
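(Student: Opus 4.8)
The plan is to first pin down the combinatorial shape of a defect-free positive $k$-root, and then read off all three parts of the lemma from it. The structural fact I would establish is the following consequence of Definition~\ref{def:defects}: if $\al$ is a defect-free positive $k$-root in normal form and $(x_a+x_b)$, with $a<b$, is one of its symmetric factors, then every other factor of $\al$, with indices $i<r$ say, is either disjoint from the arc $(x_a+x_b)$ or nested strictly inside it --- that is, $r<a$, or $b<i$, or $a<i<r<b$. In particular no other symmetric factor is nested inside $(x_a+x_b)$, so the symmetric factors of $\al$ have pairwise disjoint supports and are linearly ordered from left to right; thus $\al$ has a well-defined \emph{leftmost} symmetric factor. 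The proof is a short case check: a factor $(x_i\pm\xr)$ with $i<a<b<r$ would exhibit $(x_a+x_b)$ as a nested symmetric factor (defect~(ii)); the interleaving patterns $i<a<r<b$ and $a<i<b<r$ are crossings (defect~(i)); and a symmetric factor strictly inside $(x_a+x_b)$ is again a nested symmetric factor.

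Granting this, parts~(i) and~(ii) are quick. For~(i), removing the $h=h(\al)$ symmetric factors of $\al$ leaves a normal-form product of $k-h$ antisymmetric factors on $2(k-h)$ distinct indices, so $f(\al)$ is a positive $(k-h)$-root of height $0$ by Lemma~\ref{lem:normform}. It has no defect of type~(ii) or~(iv), having no symmetric factor, and no crossing, since a crossing in $f(\al)$ would already be a crossing in $\al$. A type-(iii) defect of $f(\al)$ would consist of an antisymmetric factor $(x_i-\xr)$ of $\al$ and an index $j$ with $i<j<r$ that is unused in $f(\al)$; such a $j$ is either unused in $\al$ --- which is a defect~(iii) of $\al$ --- or an index of a symmetric factor $(x_a+x_b)$ of $\al$, which the structural fact forbids, as in each permitted position of $(x_a+x_b)$ relative to $(x_i-\xr)$ the index $j\in\{a,b\}$ fails $i<j<r$. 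Hence $f(\al)\in\btnk{n}{k-h}{0}$. For~(ii): the positions $j$ with $\thelab(\al)_j=2$ are exactly the (distinct) larger endpoints of the $k-h(\al)$ antisymmetric factors of $\al$, which gives the count; and since $f(\al)$ has precisely the antisymmetric factors of $\al$, the definition of the label gives $\thelab(f(\al))=\thelab(\al)$.

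For~(iii) I would prove injectivity by peeling symmetric factors off one at a time. For $h\geq 1$, let $g:\btnk{n}{k}{h}\to\btnk{n}{k-1}{h-1}$ delete the leftmost symmetric factor of $\al$. The structural fact shows the output is still defect-free: deleting an arc can create a new defect only of type~(iii) or~(iv), involving one of the two newly freed indices, and the laminar position of the deleted arc relative to the remaining factors forbids this. So $g$ is well-defined, and the map $f$ in the statement is the $h$-fold composite $\btnk{n}{k}{h}\to\btnk{n}{k-1}{h-1}\to\cdots\to\btnk{n}{k-h}{0}$ of such deletion maps, since removing leftmost symmetric factors repeatedly removes them all. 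It therefore suffices to show each $g$ is injective, which I would do by recovering the deleted factor from $\gamma:=g(\al)$: writing $U$ for the set of indices unused by $\gamma$, defect~(iv) of $\al$ forces the larger index of the deleted symmetric factor to be $\max U$, and then defect~(iii) of $\al$ --- using that every index unused by $\al$ lies strictly to the left of the leftmost symmetric factor of $\al$ --- forces the smaller index to be the second-largest element of $U$. Thus $\al=\gamma\cdot(x_a+x_b)$ is determined by $\gamma$, so $g$, and hence $f$, is injective.

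The main obstacle is the structural fact together with the verification that $g$ genuinely lands in $\btnk{n}{k-1}{h-1}$ --- i.e.\ that deleting the leftmost symmetric arc of a defect-free root introduces no new defect of any of the four kinds. This is pure laminar bookkeeping, but it needs care; once it is done, parts~(i) and~(ii) and the ``forced deleted factor'' step of~(iii) are routine.
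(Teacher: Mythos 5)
Your proof is correct and rests on the same two key observations as the paper's: (1) no non-symmetric-factor index of $\al$ is ever strictly between the endpoints of a factor or to the right of a symmetric factor (your ``structural fact'' plus the defect~(iii)/(iv) constraints on unused indices), and (2) the symmetric factors are laminar and disjoint, hence ordered. Parts~(i) and~(ii) are essentially identical to the paper's argument, just packaged through the structural fact rather than by the direct case split in Proposition~\ref{prop:reduction} terms. The one genuine difference is in part~(iii): the paper reconstructs all $h$ symmetric factors at once (the largest $2h$ unused indices of $f(\al)$ must be the symmetric-factor indices by defect~(iv), and they must pair up consecutively by defects~(i) and~(ii)), whereas you peel them off one at a time via the intermediate map $g:\btnk{n}{k}{h}\to\btnk{n}{k-1}{h-1}$. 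That route is sound, but it costs you an extra verification --- that deleting a single (leftmost) symmetric factor preserves defect-freeness --- which the direct argument avoids entirely; on the other hand, it makes the ``which factor was deleted'' step very transparent. Either way the conclusion and coefficients match, so this is a valid, slightly more incremental variant of the paper's proof rather than a structurally new one.
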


\begin{proof}
It follows from the definition of normal form that removing $h$ factors from the normal form of a positive $k$-root will give
the normal form of a positive $(k-h)$-root. It remains to show that the resulting $(k-h)$-root has no defects.

Because $f(\al)$ has no symmetric factors by construction, it can have no defects of types (ii) or (iv) in 
Definition \ref{def:defects}. It is also immediate that the removal of factors cannot create new crossings, which means that 
$f(\al)$ has no defects of type (i) in Definition \ref{def:defects}. The only way $f(\al)$ can have a defect is if we are in
the situation of Definition \ref{def:defects} (iii), and because $f(\al)$ has no symmetric factors, we must be in the more
specific situation of relation \ref{eq:7} of Proposition \ref{prop:reduction}.

We may now assume that $f(\al)$ has a factor of the form $(x_i - \xr)$ and an unused index $j$ with $i < j < r$. Because $\al$
has no defects, the index $j$ must be involved in a symmetric factor $(x_j + x_m)$ of $\al$, for some $m \ne i, j, r$. 
We cannot have $m < i$ or $m > r$ because the factors $(x_i - \xr)$ and $(x_j + x_m)$ of $\al$ would create a crossing, and we 
cannot have $i < m < r$ because the factors $(x_i - \xr)$ and $(x_j + x_m)$ of $\al$ would create a nested symmetric factor. 
This completes the proof of (i).

Part (ii) follows from (i) and Definition \ref{def:thelabal}.

To prove (iii), we need to show that the $h$ symmetric factors of $\al$ are uniquely determined by $h$ and $f(\al)$. Because
$\al$ has no obstructed symmetric factors, it must be the case that the largest $2h$ unused indices of $f(\al)$ are precisely
the indices of the symmetric factors of $\al$. Let us denote these indices by $\{i_1, i_2, \ldots, i_{2h}\}$, where
$i_1 < i_2 < \cdots < i_{2h}$. Because $\al$ has no crossings and no nested symmetric factors, the symmetric factors of $\al$
must be $$
(x_{i_1} + x_{i_2}),
(x_{i_3} + x_{i_4}), \ldots,  
(x_{i_{2h-1}} + x_{i_{2h}})
.$$ This completes the proof of (iii).
\end{proof}

\begin{exa}
Consider the case $n = 12$, $k = 5$, $h = 2$. Let $\al$ be the element of 
$\btnk{12}{5}{2}$ given by $$
\al = (x_2 - x_3)(x_5 + x_{10})(x_6 - x_9)(x_7 - x_8)(x_{11} + x_{12})
,$$ so that $f(\al)$ is the element of $\btnk{12}{3}{0}$ given by $$
f(\al) = (x_2 - x_3)(x_6 - x_9)(x_7 - x_8)
.$$ Both $\al$ and $f(\al)$ have the label $112111122111$, where the $2$s appear in positions $3$, $8$, and $9$.
The largest $2h (= 4)$ indices not appearing in $f(\al)$ are $\{5, 10, 11, 12\}$, from which it follows that the symmetric
factors appearing in $\al$ are $(x_5 + x_{10})$ and $(x_{11} + x_{12})$.
\end{exa}

\begin{lemma}\label{lem:htzero}
Let $\al \in \btnk{n}{k}{0}$ be a $k$-root of height $0$ that has no defects, and let $p(\al)$ be the normal form of $\al$. 
For each index $j$ satisfying $\thelab(\al)_j = 2$, define $g(j)$ to be the unique index for which 
$(x_{g(j)} - x_j)$ is a factor in $p(\al)$. Then $i = g(j)$ is the largest index $i < j$ such that both $\thelab(\al)_i = 1$ and 
$p(\al)$ contains no factor of the form $(x_i - x_m)$ for any $m < j$.
\end{lemma}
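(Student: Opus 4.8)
The plan is to establish Lemma~\ref{lem:htzero} by translating the two defining properties of a positive $k$-root of height $0$ with no defects into explicit constraints on the factor $(x_{g(j)} - x_j)$, and then arguing that these force $g(j)$ to be the index described in the statement. First I would fix $j$ with $\thelab(\al)_j = 2$ and set $i = g(j)$, so that $(x_i - x_j)$ is a factor of $p(\al)$ with $i < j$. Since $x_i$ appears as the left (positive) term of an antisymmetric factor and $\al$ is in normal form, the index $i$ is ``used'', and I claim $\thelab(\al)_i = 1$: indeed, $\thelab(\al)_i = 2$ would mean $p(\al)$ has a factor $(x_m - x_i)$ for some $m < i$, but then the factors $(x_m - x_i)$ and $(x_i - x_j)$ would share the index $i$, which is impossible since the $2k$ indices of a $k$-root are distinct. (Equivalently: each index occurs in exactly one factor, so $i$ cannot simultaneously be the larger index of one factor and the smaller index of another.) Next, I would show $p(\al)$ contains no factor $(x_i - x_m)$ with $m < j$ other than $(x_i - x_j)$ itself: again because indices are distinct, $x_i$ occurs in a unique factor of $p(\al)$, namely $(x_i - x_j)$, so there is simply no other factor involving $x_i$ at all. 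This shows $i = g(j)$ has \emph{both} stated properties.

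It then remains to prove maximality: if $i < i' < j$ and $\thelab(\al)_{i'} = 1$, then $p(\al)$ must contain a factor $(x_{i'} - x_m)$ for some $m < j$, so $i'$ fails the second condition. The key step here is to use the absence of defects. Since $\thelab(\al)_{i'} = 1$, the index $i'$ is not the larger index of any antisymmetric factor; because $\al$ has height $0$, there are no symmetric factors; so $i'$ is either an unused index of $\al$ or the smaller index of some antisymmetric factor $(x_{i'} - x_m)$. If $i'$ were unused, then the factor $(x_i - x_j)$ together with the unused index $i'$ satisfying $i < i' < j$ would constitute a defect of type~(iii) in Definition~\ref{def:defects} (a nested unused index), contradicting $\al \in \bnk$. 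Hence $i'$ is the smaller index of a factor $(x_{i'} - x_m)$ with $i' < m$. I must still rule out $m \geq j$: if $m > j$ (note $m \neq j$ since $x_j$ already occurs in $(x_i - x_j)$), then the two factors $(x_i - x_j)$ and $(x_{i'} - x_m)$ with $i < i' < j < m$ form a crossing, which is a defect of type~(i), again contradicting $\al \in \bnk$. Therefore $m < j$, and $i'$ fails the stated condition, completing the proof of maximality and hence the lemma.

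The main obstacle I anticipate is purely bookkeeping: making sure every index inequality is handled, in particular distinguishing the cases $m < i$, $i < m < j$, $m = j$ (excluded by distinctness), and $m > j$ when applying the defect definitions, and confirming that in each relevant case the pair of factors genuinely matches one of the patterns (i) or (iii) of Definition~\ref{def:defects} with the required strict ordering $i < j < r < s$ on the indices. There is no deep idea needed beyond ``indices of a $k$-root are distinct'' plus ``no defects forbids crossings and nested unused indices''; the care lies in checking that a height-$0$ $k$-root with $\thelab(\al)_{i'}=1$ leaves only the two possibilities (unused, or smaller index of an antisymmetric factor) for $i'$, which is exactly where the height-$0$ hypothesis is used to eliminate symmetric factors.
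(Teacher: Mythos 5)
Your proof is correct and takes essentially the same approach as the paper: verify that $g(j)$ itself satisfies the two conditions, then for any $i'$ with $g(j) < i' < j$ and $\thelab(\al)_{i'}=1$, use the height-$0$ hypothesis and the absence of nested-unused-index and crossing defects to force a factor $(x_{i'}-x_m)$ with $i'<m<j$. The only difference is that you spell out the easy verification that $g(j)$ satisfies the conditions, which the paper merely asserts.
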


\begin{rmk}
If one replaces the $1$s in $\thelab(\al)$ by open parentheses and the $2$s by close parentheses, then the map $g$ in the statement
locates the open parenthesis that matches a given close parenthesis. Lemma \ref{lem:immed} shows that it is always possible
to find a match.
\end{rmk}

\begin{proof}[Proof of Lemma \ref{lem:htzero}]
We know that there is at least one index $i < j$ such that both $\thelab(\al)_i = 1$ and $p(\al)$ contains no factor of the form
$(x_i - x_m)$ for any $m < j$, because $g(j)$ itself satisfies these conditions. Suppose for a contradiction that there exists such 
an $i$ for which $g(j) < i < j$.

If $i$ is an unused index in $\al$, then it is a nested unused index relative to the factor $(x_{g(j)} - x_j)$, which contradicts
the assumption that $\al \in \btnk{n}{k}{0} \subseteq \bnk$. Because $\thelab(\al)_i = 1$, the only other possibility is for
$x_i$ to be involved in a factor of the form $(x_i - x_m)$ with $g(j) < i < j < m$. In this case, the pair $(x_{g(j)} - x_j)$,
$(x_i - x_m)$ forms a crossing, which also contradicts the assumption $\al \in \btnk{n}{k}{0}$, completing the proof.
\end{proof}

\begin{lemma}\label{lem:leq}
Maintain the above notation.
\begin{itemize}
\item[{\rm (i)}]{For each $0 \leq h \leq k$, the restriction of $\thelab$ to $\btnk{n}{k}{h}$ is injective.}
\item[{\rm (ii)}]{The labelling function $\thelab : \bnk \ra \latnk$ is injective.}
\end{itemize}
\end{lemma}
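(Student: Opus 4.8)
The plan is to prove (i) first and then deduce (ii) from it together with Lemma \ref{lem:deletion}. For (i), fix $h$ and suppose $\al, \al' \in \btnk{n}{k}{h}$ with $\thelab(\al) = \thelab(\al')$. I would like to reduce immediately to the case $h = 0$: by Lemma \ref{lem:deletion}(ii), $\thelab(f(\al)) = \thelab(\al) = \thelab(\al') = \thelab(f(\al'))$, so if injectivity is known for height $0$ then $f(\al) = f(\al')$, and then Lemma \ref{lem:deletion}(iii) (injectivity of $f$) gives $\al = \al'$. So the whole content of (i) is the case $h = 0$.

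For the height-$0$ case, let $\al, \al' \in \btnk{n}{k}{0}$ have the same label $w = \thelab(\al)$. Both normal forms consist entirely of antisymmetric factors $(x_i - x_j)$ with $i < j$, and the set of indices $j$ appearing as a \emph{larger} index of some factor is exactly $\{j : w_j = 2\}$ in both cases; the unused indices are the same too, namely the positions $i$ with $w_i = 1$ that are not used as a smaller index. The key point is that Lemma \ref{lem:htzero} pins down the entire factorization from $w$ alone: for each $j$ with $w_j = 2$ there is a unique factor $(x_{g(j)} - x_j)$, and $g(j)$ is characterized purely combinatorially as the largest $i < j$ with $w_i = 1$ such that no factor $(x_i - x_m)$ with $m < j$ occurs. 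I would make this into a clean induction on the positions $j$ with $w_j = 2$, taken in increasing order: assuming $\al$ and $\al'$ agree on all factors $(x_{g(m)} - x_m)$ for $m < j$ with $w_m = 2$, the description of $g(j)$ in Lemma \ref{lem:htzero} depends only on $w$ and on those already-determined factors, hence $g(j)$ is the same for $\al$ and $\al'$, so the factor containing $x_j$ is the same. This exhausts all factors of both $k$-roots (every factor has a unique larger index $j$ with $w_j = 2$), so $\al = \al'$, proving (i).

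For (ii), suppose $\al, \al' \in \bnk$ with $\thelab(\al) = \thelab(\al')$. By Lemma \ref{lem:deletion}(ii) the number of $2$'s in the common label equals both $k - h(\al)$ and $k - h(\al')$, so $h(\al) = h(\al')$; that is, $\al$ and $\al'$ lie in the same $\btnk{n}{k}{h}$, and (i) finishes the job. The main obstacle is really internal to the height-$0$ argument: one has to be careful that the recursive characterization of $g(j)$ in Lemma \ref{lem:htzero} only references data (the label $w$ and the factors with smaller second-index) that has already been shown to coincide for $\al$ and $\al'$, so that the induction is not circular. Once the induction is set up in increasing order of $j$, this is straightforward, since the condition ``$p(\al)$ contains no factor $(x_i - x_m)$ for any $m < j$'' only constrains factors whose larger index is $< j$, all of which have been matched at earlier stages.
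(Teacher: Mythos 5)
Your proposal is correct and follows essentially the same route as the paper: reduce part (i) to the height-$0$ case via Lemma \ref{lem:deletion}, use Lemma \ref{lem:htzero} to reconstruct a height-$0$ element from its label by induction on $j$, and deduce (ii) from (i) plus the observation that the number of $2$'s in the label pins down the height. You spell out the non-circularity of the induction in more detail than the paper does, but the argument is the same.
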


\begin{proof}
Lemma \ref{lem:htzero} shows that if $h = 0$ then we can use induction on $j$ to reconstruct $\al$ from $\thelab(\al)$. This
proves (i) in the case $h = 0$. The general result of (i) now follows by combining the result for $h = 0$ with Lemma
\ref{lem:deletion}.

For (ii), observe that the set $\bnk$ is the disjoint union of the sets $\btnk{n}{k}{h}$ for $0 \leq h \leq k$. If 
$\al \in \btnk{n}{k}{h}$, then the number of occurrences of $2$ in $\thelab(\al)$ is $k - h$. It follows that the images
of the sets $\btnk{n}{k}{h}$ for $0 \leq h \leq k$ are pairwise disjoint, which completes the proof.
\end{proof}

\begin{theorem}\label{thm:canbas}
Let $n \leq 2$ and $0 \leq k \leq n/2$ be integers, and let $\bnk$ be the set of positive $k$-roots with no defects.
\begin{itemize}
\item[{\rm (i)}]{The labelling function $\lambda : \bnk \ra \latnk$ is a bijection.}
\item[{\rm (ii)}]{The set $\bnk$ is a basis for $\vnk$ over $\Q$.}
\item[{\rm (iii)}]{Every positive $k$-root is $\bnk$-positive, with integer coefficients.}
\item[{\rm (iv)}]{The elements $\bnk$ are the only positive $k$-roots that cannot be written as positive linear 
combinations of other positive $k$-roots.}
\end{itemize}
\end{theorem}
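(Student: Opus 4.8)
The plan is to assemble the four parts from the lemmas already in hand, with the only real work being a dimension count to upgrade ``spanning'' to ``basis.''

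For part (i), I would combine Lemma \ref{lem:leq}(ii), which gives injectivity of $\thelab : \bnk \ra \latnk$, with a surjectivity argument. For surjectivity I would take an arbitrary lattice word $w \in \latnk$ with, say, $k-h$ occurrences of $2$, and construct a height-$h$ positive $k$-root with no defects whose label is $w$. The construction mimics the parenthesis-matching idea from the remark after Lemma \ref{lem:htzero}: read $w$ from left to right, and whenever a $2$ is encountered in position $j$, attach the antisymmetric factor $(x_{g(j)}-x_j)$ where $g(j)$ is the position of the matching $1$ (the nearest unmatched $1$ to the left, which exists since $w$ is a lattice word); this produces the unique height-$0$ candidate. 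Then, for the height-$h$ case, adjoin symmetric factors on the largest $2h$ still-unused indices, pairing them off in increasing order exactly as dictated by Lemma \ref{lem:deletion}(iii). One then checks directly that this $k$-root has no crossings, no nested symmetric factors, no nested unused indices, and no obstructed symmetric factors — each follows from the ``nearest unmatched $1$'' choice and from placing the symmetric factors on the \emph{largest} available indices. Hence $\thelab$ is onto, proving (i).

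For part (ii), Lemma \ref{lem:spanning}(ii) already says $\bnk$ spans $\vnk$ and has at least $\binom{n}{k}$ elements. By (i), $|\bnk| = |\latnk|$, so I need $|\latnk| = \binom{n}{k} = \dim \vnk$. This is a standard count: lattice words of length $n$ in $\{1,2\}$ with exactly $i$ twos are counted by the ballot number $\frac{n-2i+1}{n-i+1}\binom{n}{i}$, and summing over $0 \le i \le k$ telescopes to $\binom{n}{k}$ (this is the well-known identity $\sum_{i=0}^{k}\left(\binom{n}{i}-\binom{n}{i-1}\right) = \binom{n}{k}$). Alternatively, and more cleanly, I would invoke Lemma \ref{lem:span}(ii): the $\C S_n$-character of $\vnk$ is $\sum_{i=0}^k \chi^{(n-i,i)}$, and $\chi^{(n-i,i)}(1) = \binom{n}{i}-\binom{n}{i-1}$ equals the number of standard Young tableaux of shape $(n-i,i)$, which by Remark \ref{rmk:tableaux} is exactly the number of lattice words with $i$ twos; summing gives $|\latnk| = \dim \vnk$. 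Either way, a spanning set of size equal to the dimension is a basis.

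Part (iii) is already Lemma \ref{lem:spanning}(i) verbatim, so nothing new is needed. For part (iv), one direction is immediate from (iii): if $\al \notin \bnk$ is a positive $k$-root, it has a defect, and Proposition \ref{prop:reduction} writes it as a sum of two or more strictly lower positive $k$-roots — in particular as a positive linear combination of other positive $k$-roots. Conversely, suppose $\al \in \bnk$ and $\al = \sum_p c_p \be_p$ with $c_p > 0$ and each $\be_p$ a positive $k$-root different from $\al$. Using (iii), expand each $\be_p$ as a nonnegative integer combination of elements of $\bnk$; since $\al$ itself is a basis element with coefficient $1$ and no $\be_p$ equals $\al$, some $\be_p$ must have a nonzero $\bnk$-coefficient on $\al$, giving $\al = c\al + (\text{other basis terms with nonnegative coefficients})$ with $c > 0$; comparing coefficients in the basis $\bnk$ forces $c = 1$ and all other terms to vanish, whence $\be_p = \al$, a contradiction. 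The main obstacle in the whole argument is the surjectivity/well-definedness bookkeeping in part (i): one must verify carefully that the parenthesis-matching construction together with the ``largest indices get the symmetric factors'' rule really does avoid all four defect types simultaneously, and that it inverts $\thelab$. Everything else is either already proved or a routine dimension count. I should also note in passing that the theorem statement's hypothesis ``$n \le 2$'' is a typo for ``$n \ge 2$'' (and ``$0 \le k$'' should be ``$0 < k$''), matching the standing assumptions of the section.
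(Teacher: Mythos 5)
Your proposal is correct, and it reaches the result by a genuinely different route for part (i). The paper never proves surjectivity of $\thelab$ directly: instead it bounds $|\btnk{n}{k}{h}| \leq T(n,k-h)$ by injectivity of the label restricted to each height stratum (Lemmas \ref{lem:deletion} and \ref{lem:leq}), sums the telescoping series to get $|\bnk|\leq\binom{n}{k}$, and then invokes Lemma \ref{lem:spanning}(ii) for the reverse inequality. Equality forces the injections to be bijections, so parts (i) and (ii) fall out of a single counting argument without ever constructing an inverse to $\thelab$. Your plan is to construct the inverse explicitly — parenthesis-matching to recover the antisymmetric factors, then pairing the largest unmatched positions for the symmetric factors per Lemma \ref{lem:deletion}(iii) — and then verify no-defects. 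That approach works, and it has the virtue of giving a concrete algorithm for $\thelab^{-1}$; but it does require the four-way case check you flag as ``the main obstacle,'' which the paper's cardinality argument avoids entirely. Both are valid; the paper's is shorter, yours is more constructive. For part (iv), your argument is essentially the paper's: the paper compresses the final step into ``each such $k$-root is a scalar multiple of $\al$, hence equals $\al$ by Remark \ref{rmk:scalars},'' and your conclusion ``whence $\be_p = \al$'' tacitly uses the same remark (a nonnegative integer multiple of $\al$ that is a $k$-root must have coefficient $\pm1$), so you should cite it explicitly rather than leave it implicit. Part (iii) as you note needs (ii) first so that the term ``$\bnk$-positive'' is even defined; you have that, so it is fine. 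Your observation about the typos ``$n\leq 2$'' and ``$0\leq k$'' in the statement is correct.
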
 

\begin{proof}
Let $T(n, j)$ be the number of lattice words of length $n$ in the alphabet $\{1, 2\}$ where there are precisely $j$ occurrences
of $2$. It is known (see Sequence A008315 of \cite{oeis}) that $$
|T(n, j)| = \binom{n}{j} - \binom{n}{j-1}
,$$ where we interpret $\binom{n}{-1}$ to be zero.

Lemma \ref{lem:deletion} now implies that we have $|\btnk{n}{k}{h}| \leq T(n, k-h)$, and summing over $h$ gives $$
|\bnk| = \sum_{h = 0}^k |\btnk{n}{k}{h}| \leq \sum_{h=0}^k T(n, k-h) = \sum_{h=0}^k T(n, h) 
= \sum_{h=0}^k \left[ \binom{n}{h} - \binom{n}{h-1} \right] = \binom{n}{k}
.$$

Lemma \ref{lem:spanning} (ii) now implies that the inequality of the last paragraph is an equality, and that the injective
maps of Lemma \ref{lem:leq} are bijective, proving (i).

Part (ii) follows because we have $|\bnk| = \binom{n}{k} = \dim(\vnk)$, and part (iii) follows by combining (ii) 
with Lemma \ref{lem:spanning} (i).

If $\al$ is a positive $k$-root that is not an element of $\bnk$, then $\al$ has a defect, and it can be written as a positive 
integral linear combination of other positive $k$-roots by Proposition \ref{prop:reduction}. On the other hand, if $\al \in \bnk$ 
and $\al$ is a positive linear combination of other positive $k$-roots, it follows from (iii) that each of these positive 
$k$-roots must be a scalar multiple of $\al$. By Remark \ref{rmk:scalars}, each of the positive $k$-roots must equal $\al$, 
which is a contradiction and proves (iv).
\end{proof}

From now on, we will call $\bnk$ the {\it canonical basis} of $\vnk$.

\begin{rmk}
Parts (iii) and (iv) of Theorem \ref{thm:canbas} are familiar in the context of root systems. They show that the canonical basis
may be characterized purely in terms of the vector space ordering on $\vnk$, without relying on the concept of defects at all.
\end{rmk}

\begin{exa}
An example of the basis $\bnk$ that does not come from a root system is the case $n=4$, $k=2$. There are 12 positive and 12 negative
$2$-roots, and the elements of $\bcnk{4}{2}$ and their labels are as follows.

\begin{center}
\begin{tabular}{ |c|c| } 
 \hline
 Canonical basis element & Label \\
 \hline
$(x_1 + x_2)(x_3 + x_4)$ & $1111$ \\
$(x_1 + x_2)(x_3 - x_4)$ & $1112$ \\
$(x_1 + x_4)(x_2 - x_3)$ & $1121$ \\
$(x_1 - x_2)(x_3 + x_4)$ & $1211$ \\
$(x_1 - x_4)(x_2 - x_3)$ & $1122$ \\
$(x_1 - x_2)(x_3 - x_4)$ & $1212$ \\
 \hline
\end{tabular}
\end{center}
The six positive roots that are not basis elements come from the left hand sides of 
relations \ref{eq:1}--\ref{eq:6} of Proposition \ref{prop:reduction}, taking $i=1$, $j=2$, $r=3$ and $s=4$.
\end{exa}

\begin{rmk}\label{rmk:kl}
There are other constructions of the basis $\bnk$. One of these comes from the Kazhdan--Lusztig basis $\{C_w\}$ from 
\cite{kl79}, specifically, the basis of the module arising from the left cell containing the permutation 
$(1, 2)(3, 4)\cdots(k-1, k)$ in type $D_n$ with $q=1$. One can also construct $\bnk$ from a basis for the generalized 
Temperley--Lieb algebra of type $D_n$, after specializing $q$ to $1$ and twisting by sign. The latter basis may be 
defined in terms of monomials as in \cite[\S6.2]{fan97}, or in terms of diagrams as in \cite{green98}, and the 
definition of ``defect" in this paper is closely related to the diagrammatic rules in \cite{green98}.
When $k=n/2$, it is necessary in all these constructions to take the union of two cells: the one just described, and its 
image under the automorphism that sends $x_n$ to $-x_n$ and fixes $x_i$ for $i < n$.
One can find module isomorphisms between these various constructions by using the characterization of Remark 
\ref{rmk:signature}.

The $k$-root approach has a significant advantage over these other constructions, which is that Proposition 
\ref{prop:reduction} makes it easy (a) to work out the effect of applying an arbitrary (signed) permutation $w$ to a basis 
element $\al$ and then (b) to express the result as a linear combination of basis elements.
\end{rmk}

\section{Main results}\label{sec:main}

In Section \ref{sec:main}, we explore some applications of $k$-roots and the basis $\bnk$ in representation theory.
We first show how $\bnk$ naturally gives rise to a composition series of $\vnk$ as an $S_n$-module. We refer the reader
to Fulton and Harris \cite{fulton13} for background information on the character theory of the symmetric groups.

\begin{defn}\label{def:vtnk}
Let $\vtnk{n}{k}{t}$ be the $\Q$-linear span of all $k$-roots in $\bnk$ that have height at most $t$; that is, $$
\vtnk{n}{k}{t} := \Span \left( \bigsqcup_{h=0}^t \btnk{n}{k}{h} \right) 
.$$
\end{defn}

\begin{prop}\label{prop:compseries}
\begin{itemize}
\item[{\rm (i)}]{The subspaces $\vtnk{n}{k}{t}$ of Definition \ref{def:vtnk} are $\Q S_n$-submodules of $\vnk$.}
\item[{\rm (ii)}]{The chain $$
\vtnk{n}{k}{-1} := 0 
< \vtnk{n}{k}{0}
< \vtnk{n}{k}{1}
< \cdots 
< \vtnk{n}{k}{k} = \vnk
$$ is a composition series of $\vnk$ as a $\Q S_n$-module.}
\end{itemize}

\end{prop}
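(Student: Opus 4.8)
The plan is to prove (i) first and then deduce (ii) by a dimension count against the known character of $\vnk$. For (i), the key is that each $\vtnk{n}{k}{t}$ is spanned by $\bnk$-elements of height $\leq t$, so it suffices to show that applying any generator of $S_n$ — say an adjacent transposition $s=(m,m+1)$ — to a positive $k$-root $\al$ of height $h \leq t$ produces a vector that lies in $\vtnk{n}{k}{t}$. Applying $s$ to $\al$ in normal form permutes the indices $m$ and $m+1$ in the factors; the result $s(\al)$ is $\pm$ a $k$-root, and crucially the number of symmetric factors is unchanged, so $s(\al)$ is $\pm$ a $k$-root of height exactly $h$. By Lemma \ref{lem:normform} we may rewrite $\pm s(\al)$ as a positive $k$-root of height $h$, and then by Theorem \ref{thm:canbas}(iii) (together with the height-preserving nature of the reductions in Proposition \ref{prop:reduction}, which one checks by inspecting relations \ref{eq:1}--\ref{eq:9}: none of them changes the number of symmetric factors) it is a $\bnk$-positive combination of elements of height $h$. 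Hence $s(\al) \in \vtnk{n}{k}{h} \subseteq \vtnk{n}{k}{t}$, and since the adjacent transpositions generate $S_n$, $\vtnk{n}{k}{t}$ is a $\Q S_n$-submodule.

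The one point needing care above is that $s(\al)$ might fail to be a $k$-root at all: if $s$ maps a factor $(x_m \pm x_j)$ to $(x_{m+1} \pm x_j)$ this is fine, but if $\al$ simultaneously involves $m$ in one factor and $m+1$ in another, say $(x_a \pm x_m)$ and $(x_b \pm x_{m+1})$, then $s$ swaps $m \leftrightarrow m+1$ in both and the result is again a product of $k$ linear factors on $2k$ distinct indices, hence still a $k$-root. If $m$ is used and $m+1$ is unused (or vice versa), $s(\al)$ is again a $k$-root on a (different) set of $2k$ distinct indices. In every case $s(\al)\in\cnk$, and its height equals $h(\al)$ because $s$ does not change whether a factor is symmetric or antisymmetric. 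So this is really routine once phrased correctly, and I expect it to be the main obstacle only in the sense of requiring a clean case analysis.

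For (ii), the chain is strictly increasing because, by Lemma \ref{lem:deletion}(ii) and the bijectivity of $\lambda$ from Theorem \ref{thm:canbas}(i), $\btnk{n}{k}{h}$ is nonempty for each $0 \leq h \leq k$ (for instance the lattice word with all $k$ of its $2$s as far left as permitted has exactly $k-h$ twos, so corresponds to a height-$h$ basis element), hence $\dim \vtnk{n}{k}{h} - \dim \vtnk{n}{k}{h-1} = |\btnk{n}{k}{h}| = T(n,k-h) > 0$. To see it is a composition series, I will identify the successive quotients. By Lemma \ref{lem:deletion}, $|\btnk{n}{k}{h}| = T(n,k-h) = \binom{n}{k-h} - \binom{n}{k-h-1} = \dim \chi^{(n-(k-h),\,k-h)}$, so the graded pieces have the right dimensions to match the decomposition of Lemma \ref{lem:span}(ii), $\sum_{i=0}^k \chi^{(n-i,i)}$, under $i = k-h$. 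Since $\vnk$ is multiplicity-free as a $\C S_n$-module (it is the Gelfand-pair permutation module, or directly from Lemma \ref{lem:span}(ii) the $\chi^{(n-i,i)}$ are pairwise distinct), any submodule is a sum of a subset of the $V_i$, and a chain of submodules with successive dimension jumps equal to $\dim V_{k-h}$ for $h=0,1,\dots,k$ must have each quotient $\vtnk{n}{k}{h}/\vtnk{n}{k}{h-1}$ irreducible, isomorphic to $V_{k-h}$. This forces the chain to be a composition series, completing the proof.
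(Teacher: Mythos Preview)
Your approach is essentially the same as the paper's, and the overall argument is sound, but there is one factual misstatement worth flagging. You claim that ``none of [relations \ref{eq:1}--\ref{eq:9}] changes the number of symmetric factors,'' but this is false: in relation \ref{eq:4}, the left side has two symmetric factors while the second summand $(x_i-x_s)(x_j-x_r)$ has none; relations \ref{eq:6}, \ref{eq:8}, and \ref{eq:9} similarly produce terms of strictly lower height. The correct statement, which is exactly what the paper says, is that the reductions express a positive $k$-root as a sum of positive $k$-roots of the \emph{same or lower} height. Fortunately this does not damage your proof of (i), since $\vtnk{n}{k}{h}$ is by definition the span of basis elements of height $\leq h$, so ``height $\leq h$'' is all you need for the conclusion $s(\al)\in\vtnk{n}{k}{h}$.

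For (ii), your argument is correct and in fact slightly more elaborate than the paper's: you match the dimensions $|\btnk{n}{k}{h}| = T(n,k-h)$ to the dimensions of the irreducibles $\chi^{(n-(k-h),k-h)}$ and then invoke multiplicity-freeness. The paper argues more tersely by simply counting: the chain has $k+1$ nontrivial quotients and the module has $k+1$ composition factors (from Lemma \ref{lem:span}(ii)), so each quotient must be simple. Your version has the side benefit of identifying which irreducible appears at each step, which the paper defers to Lemma \ref{lem:long}(iv).
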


\begin{proof}
Let $\al \in \bnk$ be a canonical basis element of height $h$, and let $w \in S_n$ be a permutation. One of $\pm w(\al)$ is a
positive $k$-root of height $h$. A routine case by case check shows that the reduction rules in Proposition \ref{prop:reduction}
all express a positive $k$-root as a linear combination of positive $k$-roots of the same, or lower heights. It follows that
$w(\al)$ is a linear combination of canonical basis elements of height at most $h$, and this proves part (i).

Note that for each $j$ satisfying $0 \leq j \leq k$, there exists an element of $\latnk$ with precisely $j$ occurrences of $2$;
for example, the word $1^{n-j}2^j$. Each such word corresponds via Theorem \ref{thm:canbas} (i) to an element of $\bnk$ of 
height $k-j$, so there exist basis elements of all possible heights $h$ in the range $0 \leq h \leq k$.
It follows that each step in the chain in (ii) corresponds to a strict submodule, and thus that the series has
$k+1$ nontrivial quotients $\vtnk{n}{k}{h}/\vtnk{n}{k}{h-1}$.

Lemma \ref{lem:span} (ii) implies that $\vnk$ is the 
direct sum of $k+1$ irreducible $S_n$-submodules. Since this is the same as the number of nontrivial quotients in the series of
(ii), it follows both that $\vnk$ is a direct sum of $k+1$ irreducible submodules over $\Q$, and that the series in (ii) is
a composition series.
\end{proof}

The next result is useful for determining when a positive $k$-root stays positive after a permutation acts on it.

\begin{lemma}\label{lem:staypos}
Let $\al \in \cnk$ be a positive $k$-root, and let $w \in S_n$ be a permutation. If $w(\al)$ is negative, then
the normal form of $\al$ must contain a factor $(x_i - x_j)$ for which $w(i) > w(j)$.
\end{lemma}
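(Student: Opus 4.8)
The plan is to put $w(\al)$ into normal form factor by factor and keep track of the sign changes this introduces; the positivity of $w(\al)$ is then governed by the parity of the number of inverted antisymmetric factors, which immediately yields the statement.

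First I would write $\al$ in normal form, using Lemma \ref{lem:normform}, as
$$\al = \prod_{r=1}^k (x_{i_{2r-1}} + \ep_r x_{i_{2r}}),$$
where $i_{2r-1} < i_{2r}$ for each $r$ and each $\ep_r \in \{+1,-1\}$ (so $\ep_r = -1$ exactly for the antisymmetric factors of $\al$). Since $w$ is a bijection of $\{1,\dots,n\}$, applying $w$ gives
$$w(\al) = \prod_{r=1}^k (x_{w(i_{2r-1})} + \ep_r x_{w(i_{2r})}),$$
which is again an element of $\cnk$, because the $2k$ indices $w(i_1),\dots,w(i_{2k})$ are still distinct.

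Next I would reorder the two indices within each factor so the smaller one comes first, recording the sign. A symmetric factor ($\ep_r = +1$) is reordered with no sign change, since $x_a + x_b = x_b + x_a$. An antisymmetric factor ($\ep_r = -1$) requires the identity $x_a - x_b = -(x_b - x_a)$ exactly when $w(i_{2r-1}) > w(i_{2r})$. Hence
$$w(\al) = (-1)^m\,\be,$$
where $\be$ is a positive $k$-root in normal form (by Lemma \ref{lem:normform}) and $m$ is the number of indices $r$ with $\ep_r = -1$ and $w(i_{2r-1}) > w(i_{2r})$.

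Finally, since $\be$ is a positive $k$-root, $w(\al)$ is positive when $m$ is even and negative when $m$ is odd. So if $w(\al)$ is negative then $m \geq 1$, and choosing any such $r$ produces a factor $(x_i - x_j)$ of the normal form of $\al$ with $i = i_{2r-1} < i_{2r} = j$ and $w(i) > w(j)$, as required. There is no substantial obstacle here beyond bookkeeping the signs correctly; the only point worth stressing — and the content of the lemma — is that symmetric factors never contribute a sign, so only an inverted antisymmetric factor can make $w(\al)$ negative.
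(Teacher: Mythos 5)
Your proof is correct and takes essentially the same approach as the paper's: both rest on the observation that symmetric factors never produce a sign upon reordering, so only an inverted antisymmetric factor can flip the sign of $w(\al)$. The paper argues by contraposition directly from Lemma~\ref{lem:normform}, whereas you track signs explicitly and obtain the sharper conclusion that $w(\al)$ is negative iff the number of inverted antisymmetric factors is odd, but the underlying idea is identical.
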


\begin{proof}
If $w(\al)$ is negative, but there is no factor in $\al$ of the form $(x_i - x_j)$ satisfying $w(i) > w(j)$, then each 
factor in the normal form of $\al$ is sent by $w$ to 
another factor in normal form. It follows from Lemma \ref{lem:normform} that $w(\al)$ is positive, which is a contradiction.
\end{proof}

Although we know that the irreducible components of $\vnk$ have characters $\chi^{(n-i, i)}$, we will need to be able to match these
to the composition factors of the series in Proposition \ref{prop:compseries}. The next result helps with this.

\begin{lemma}\label{lem:long}
Let $A$ be a subset of $\{1, 2, \ldots, n\}$ of cardinality $a$, and let $S_A$ be the full symmetric group on $A$ considered as a subgroup
of $S_n$ of order $a!$. Define $$
x_A := \sum_{w \in S_A} w
.$$ 
\begin{itemize}
\item[{\rm (i)}]{If $V_i$ is an irreducible $\C S_n$-module with character $\chi^{(n-i, i)}$ for some $i \leq n/2$, 
then we have $x_A.V_i \ne 0$ if and only if $a \leq n-i$.}
\item[{\rm (ii)}]{If $1 \leq j \leq k$, then the $k$-root $$
\be_j = \prod_{i = 1}^j (x_i - x_{k+i}) \prod_{i=j+1}^k (x_i + x_{k+i})
$$ lies in $\vtnk{n}{k}{k-j}$.} 
\item[{\rm (iii)}]{If $\vtnk{n}{k}{t}$ is as in Proposition \ref{prop:compseries}, then we have
$x_A. \vtnk{n}{k}{t} \ne 0$ if and only if $a \leq n-k+t$.}
\item[{\rm (iv)}]{The composition factor $\vtnk{n}{k}{t}/\vtnk{n}{k}{t-1}$ has character $\chi^{(n-k+t,k-t)}$, and $\vtnk{n}{k}{t}$
has character $$
\sum_{i = k-t}^k \chi^{(n-i, i)}
.$$}
\end{itemize}  
\end{lemma}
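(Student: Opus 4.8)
The plan is to prove the four parts more or less in the stated order, using (i) as the main tool and building toward the character computation in (iv). For part (i), the key observation is that $x_A \in \C S_n$ acts on any $\C S_n$-module as $a!$ times the projection onto the $S_A$-fixed subspace. So $x_A.V_i \neq 0$ precisely when $V_i$ has a nonzero $S_A$-invariant vector, i.e. when the trivial character of $S_A \cong S_a$ appears in the restriction $\chi^{(n-i,i)}\big|_{S_a \times S_{n-a}}$ (where we pad with the remaining $n-a$ points and restrict further to $S_a$). By the branching rule / Pieri, $\chi^{(n-i,i)}\big|_{S_a}$ contains the trivial character iff the Young diagram $(n-i,i)$ can be obtained from $(a)$ by adding $n-a$ boxes with no two in the same column — equivalently, iff the first row has length at least $a$, i.e. $n-i \geq a$. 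This gives (i). (Alternatively one can argue directly: $V_i \hookrightarrow \vcnk{n}{i}$, and a squarefree degree-$i$ monomial has a nonzero $S_A$-average iff $A$ misses at least $i$ of the "active" indices, which again needs $a \le n-i$; this is the more self-contained route and fits the paper's style.)

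For part (ii), I would compute the height of $\be_j$ directly from its label. The normal form of $\be_j$ has antisymmetric factors $(x_i - x_{k+i})$ for $1 \le i \le j$, so $\thelab(\be_j)$ has $2$'s exactly in positions $k+1, \ldots, k+j$, hence $j$ occurrences of $2$. One must check $\be_j \in \bnk$, i.e. that $\be_j$ has no defects: the factors are "nested from the outside in" in index order with all symmetric factors coming after all antisymmetric ones in a way that avoids crossings, nested symmetric factors, nested unused indices (the unused indices $2k+1,\dots,n$ are all larger than every used index), and obstructed symmetric factors — a short case check against Definition \ref{def:defects}. Then by Lemma \ref{lem:deletion}(ii), $h(\be_j) = k - j$, so $\be_j \in \vtnk{n}{k}{k-j}$.

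For part (iii), one direction is immediate: $\vtnk{n}{k}{t}$ is spanned by $k$-roots of height $\le t$, hence of degree-in-symmetric-factors $\le t$; each such $k$-root, and indeed each monomial appearing in it, involves at least $k-t$ "antisymmetric pairs," forcing at least $k-t$ genuinely-moved indices among any basis monomial, so $x_A$ kills it once $a > n-(k-t) = n-k+t$. Wait — more carefully: by Proposition \ref{prop:compseries}(i), $\vtnk{n}{k}{t}$ is an $S_n$-submodule; by Lemma \ref{lem:span}(ii) and semisimplicity it is a sum of certain $V_i$'s, and by (ii) it contains $\be_{k-t} \in \vtnk{n}{k}{t}$; analyzing which $V_i$ can contain a vector like $\be_{k-t}$ (an eigenvector data as in Remark \ref{rmk:signature}) pins down the $V_i$'s occurring. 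Then (iii) follows from (i) applied componentwise: $x_A.\vtnk{n}{k}{t} \neq 0$ iff some constituent $V_i$ with $i \le k-t$ (the smallest occurring) has $a \le n-i$, and the extremal such $i$ is $i=k-t$, giving $a \le n-k+t$. Finally (iv) falls out by downward induction on $t$: comparing $x_A.\vtnk{n}{k}{t} \neq 0 \iff a \le n-k+t$ with the analogous statement for $t-1$, the "new" constituent in the quotient $\vtnk{n}{k}{t}/\vtnk{n}{k}{t-1}$ must be the unique $V_i$ with $a \le n-i \iff a \le n-k+t$ failing one step earlier, i.e. $i = k-t$, so the quotient has character $\chi^{(n-k+t, k-t)}$; summing telescopically gives the character of $\vtnk{n}{k}{t}$ as $\sum_{i=k-t}^{k} \chi^{(n-i,i)}$.

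The main obstacle I anticipate is part (iii), specifically making rigorous the claim that $\vtnk{n}{k}{t}$ contains (as constituents) exactly those $V_i$ with $i \ge k-t$ — equivalently that the composition series of Proposition \ref{prop:compseries} is "ordered by $i$" with the small diagrams on top. One needs to know not just that $\be_{k-t}$ lies in $\vtnk{n}{k}{t}$ but that it generates a copy of $V_{k-t}$ (or at least that $V_{k-t}$ occurs there), and that nothing smaller does; the cleanest way is probably to run the $x_A$-test as the diagnostic: since each $\vtnk{n}{k}{t}$ is a sum of distinct $V_i$'s and there are exactly $k+1$ of them filtered in $k+1$ steps, the filtration must introduce them one at a time, and the $x_A$-vanishing thresholds from (i) force the order to be $i = k, k-1, \ldots, 0$. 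Part (i) itself is routine, part (ii) is a short verification, and part (iv) is then a formality.
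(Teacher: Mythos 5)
Your part (i) is correct and is essentially the paper's argument (Frobenius reciprocity and the Pieri rule), and your deduction of (iv) from (i) and (iii) is also the paper's logic. However, there are two genuine gaps in (ii) and (iii).

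In part (ii), your claim that $\be_j \in \bnk$ is false for $k \geq 2$. For example with $k=2$, $j=1$, the $k$-root $\be_1 = (x_1 - x_3)(x_2 + x_4)$ has a crossing (Definition \ref{def:defects}(i), with $i<j<r<s$ being $1<2<3<4$), and in general the factors $(x_1 - x_{k+1})$ and $(x_2 \pm x_{k+2})$ always form a crossing. So $\be_j \notin \bnk$, and knowing $h(\be_j) = k-j$ does not by itself put $\be_j$ in $\vtnk{n}{k}{k-j}$, since that space is defined as the span of the \emph{canonical basis} elements of height $\leq k-j$. The paper instead observes that $\be_j$ lies in the same $S_n$-orbit as a canonical basis element of height $k-j$ (namely the one with label $1^{n-j}2^j$), and then uses Proposition \ref{prop:compseries}(i), which says $\vtnk{n}{k}{k-j}$ is an $S_n$-submodule. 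If you want to avoid the orbit argument, you can instead invoke the fact used in the proof of Proposition \ref{prop:compseries}(i) that the reduction rules in Proposition \ref{prop:reduction} never increase height, so expanding $\be_j$ in $\bnk$ only produces terms of height $\leq k-j$; but the bare assertion that $\be_j$ is defect-free does not work.

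In part (iii), your argument is circular: to conclude ``$x_A.\vtnk{n}{k}{t}\neq 0$ iff some constituent $V_i$ with $i\leq k-t$ has $a\leq n-i$'' you need to know that $V_{k-t}$ is the smallest-index constituent of $\vtnk{n}{k}{t}$, which is exactly the content of (iv) — and you then propose to deduce (iv) from (iii). Your first attempt (``forcing at least $k-t$ genuinely-moved indices, so $x_A$ kills it'') gestures at the right idea but is not a proof: having many moved indices does not by itself imply annihilation by $x_A$. The paper's argument for the annihilation direction is combinatorial and direct: if $a > n-k+t$, then by a pigeonhole count some antisymmetric factor $(x_i - x_j)$ of any $\al \in \bnk$ of height $\leq t$ has both $i,j \in A$; the transposition $w=(i,j)$ satisfies $(1+w)\al = 0$, and since $x_A$ factors as $x'(1+w)$, this gives $x_A\al = 0$. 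For the non-annihilation direction, the paper takes $A = \{1,\dots,a\}$ with $a \leq n-k+t$ and $\be = \be_{k-t}$, observes that no antisymmetric factor of $\be$ has both endpoints in $A$, and then uses Lemma \ref{lem:staypos} to show every $w(\be)$ with $w \in S_A$ is a positive $k$-root, so $x_A(\be)$ is a nontrivial sum of positive $k$-roots, hence nonzero by Theorem \ref{thm:canbas}(iii). You have the right candidate vector $\be_{k-t}$ but are missing the Lemma \ref{lem:staypos} positivity step that actually makes the argument close.
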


\begin{proof}
  If we define $B = \{1, 2, \ldots, a\}$, then we have $x_A = g x_B g^{-1}$ for
  some $g \in S_n$, which implies that $x_A$ and $x_B$ annihilate the same
  modules. It
is therefore enough to consider the case where $A = \{1, 2, \ldots, a\}$.

The condition that $x_A.V_i \ne 0$ is equivalent to the condition that $V_i$, when regarded as an $S_A$-module, contains a copy of the
trivial representation, that is, that $$
\langle 1, V_i \downarrow^{S_n}_{S_A} \rangle \ne 0
$$ in the usual inner product on characters. By Frobenius reciprocity, this is equivalent to $$
\langle 1 \uparrow_{S_A}^{S_n}, V_i \rangle \ne 0
.$$ Since $S_A$ is a Young subgroup of $S_n$ of type $S_a \times S_1 \times \cdots \times S_1$, where there are $n-a$ copies of $S_1$,
it follows that the character of $1 \uparrow_{S_A}^{S_n}$ corresponds to the product of Schur functions $$
s_{(a)} s_{(1)} \cdots s_{(1)}
,$$ where again there are $n-a$ copies of $s_{(1)}$. This corresponds to adding $n-a$ boxes, one at a time, to the partition $(a)$. This
will result in at least one copy of $s_{(n-i, i)}$ if and only if we have $n - a \geq i$; otherwise, there are not enough single boxes to
fill the second row. This proves (i).

Let the $k$-root $\be_j$ be as in the statement of (ii). 
Observe that $\be_j$ is a $k$-root of height $k-j$, and it is in the same
$S_n$-orbit as any element of $\btnk{n}{k}{k-j}$, for example, the
element of $\bnk$ whose label is $1^{n-j}2^j$. Since $\be_j$ is
in the same $S_n$-orbit as an element of $\vtnk{n}{k}{k-j}$, it follows that
$\be_j \in \vtnk{n}{k}{k-j}$, proving (ii).

To prove (iii), note that any basis element $\al$ of $\vtnk{n}{k}{t}$ has at least $k-t$ antisymmetric factors. If we have 
$a > n-(k-t)$, then 
it is inevitable that at least one of the antisymmetric factors of $\al$ is of the form $(x_i - x_j)$ where both of $i$ and $j$ lie 
in $A$. It follows that $\al$ is annihilated by $1 + w$, where $w$ is the transposition $(i, j)$. By summing over a set of
left coset representatives of $\langle w \rangle$ in $S_A$, we can factorize $x_A$ as $x'(1+w)$, from which it follows that 
$x_A$ annihilates $\al$. Since $\al$ was arbitrary, we deduce that $x_A$ annihilates $\vtnk{n}{k}{t}$ if $a > n-k+t$.

It remains to show that if $a \leq n-k+t$, then $x_A$ does not annihilate $\vtnk{n}{k}{t}$. Let $\be = \be_{k-t}$ be the
$k$-root defined in (ii).
By construction, no antisymmetric factor of $\be$ has both endpoints in the set $A$. Lemma \ref{lem:staypos} now implies that
any $w \in S_A$ has the property that $w(\be)$ is a positive $k$-root. It follows that $x_A(\be)$ is a nontrivial sum of 
positive $k$-roots, and Theorem \ref{thm:canbas} (iii) shows that $x_A(\be)$ is a nontrivial sum of canonical basis elements.
In particular, $x_A$ does not annihilate $\be$, and therefore $x_A$ does not annihilate $\vtnk{n}{k}{t}$, proving (iii).

If we set $A = \{1, 2, \ldots, n-k+t\}$ and $B = \{1, 2, \ldots, n-k+t+1\}$, then (iii) implies that $x_A$ annihilates 
$\vtnk{n}{k}{t-1}$, but not $\vtnk{n}{k}{t}$, and that $x_B$ annihilates $\vtnk{n}{k}{t}$. The character of
$\vtnk{n}{k}{t}/\vtnk{n}{k}{t-1}$ is therefore the character of the form $\chi^{(n-i, i)}$ that is annihilated by $x_B$
but not by $x_A$. By (i), we find the solution is to take $i = k-t$, which proves the first assertion of (iv). The second assertion
of (iv) follows by summing over all the composition factors of $\vtnk{n}{k}{t}$.
\end{proof}

Recall (for example, from sections 2 and 3 of \cite{diaconis87} or the proof of \cite[Corollary 4.6.4 (ii)]{ceccherini08}) that 
the spherical functions $\pnkj$ are characterized by the following properties:
\begin{itemize}
    \item[(i)]{$\pnkj$ lies in the irreducible summand of $L(X)$ with character $\chi^{(n-j, j)}$;}
    \item[(ii)]{$\pnkj$ is fixed pointwise by the subgroup $K = S_k \times S_{n-k}$;}
    \item[(iii)]{$\pnkj$ takes the value $1$ at the identity coset; in other words, the coefficient of $x_1 x_2 \cdots x_k$ in
    $\pnkj$ is $1$.}
\end{itemize}
We are now ready to give a construction of these spherical functions in terms of $k$-roots.

\begin{theorem}\label{thm:main}
Let $n \geq 2$ and $0 \leq k \leq n/2$, and $0 \leq j \leq k$ be integers. Let $A = \{1, 2, \ldots, k\}$ and let 
$B = \{j+1, j+2, \ldots, n\}$.
\begin{itemize}
\item[{\rm (i)}]{As a homogeneous polynomial in $x_1, x_2, \ldots, x_n$, the $j$-th spherical function $\pnkj$
of the Gelfand pair $(S_n, S_k \times S_{n-k})$ is given by $$
\frac{(n-2k)!}{k!(n-k)!2^{k-j}(k-j)!} \left( \sum_{v \in S_A} v \right) \left( \sum_{w \in S_B} w \right) \cdot \be_j
,$$ where $\be_j$ is the $k$-root defined by $$
\be_j = \prod_{i = 1}^j (x_i - x_{k+i}) \prod_{i=j+1}^k (x_i + x_{k+i}) 
.$$}
\item[{\rm (ii)}]{The function $\pnkj$ is $\bnk$-positive, and its coefficients 
are nonnegative integer multiples of $1/N$, where $N$ is the integer $$
(k)_j(n-k)_k = \frac{k!(n-k)!}{(k-j)!(n-2k)!}
.$$}
\end{itemize}
\end{theorem}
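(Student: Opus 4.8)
The plan is to treat both parts through the unnormalised element $\Psi_j := x_A x_B\cdot\be_j$, where $x_A=\sum_{v\in S_A}v$ and $x_B=\sum_{w\in S_B}w$ in the notation of Lemma~\ref{lem:long}, and then divide by the coefficient of $x_1\cdots x_k$ in it. For (i) I would verify the three properties listed before the theorem that characterise $\pnkj$. First, $\Psi_j$ is fixed by $K=S_{\{1,\dots,k\}}\times S_{\{k+1,\dots,n\}}$: left multiplication by $S_{\{1,\dots,k\}}=S_A$ absorbs into $x_A$, while any element of $S_{\{k+1,\dots,n\}}$ has support disjoint from $A$, hence commutes with $x_A$, and also lies in $S_B$, hence is absorbed into $x_B$. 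Second, $\Psi_j$ lies in the summand with character $\chi^{(n-j,j)}$: by Lemma~\ref{lem:long}(ii) $\be_j\in\vtnk{n}{k}{k-j}$, which by Lemma~\ref{lem:long}(iv) and semisimplicity of $\Q S_n$ is the sum of the $V_i$ with $i\ge j$; since $|B|=n-j$, Lemma~\ref{lem:long}(i) shows $x_B$ annihilates $V_i$ for $i>j$, so $x_B\be_j\in V_j$, and applying $x_A$ keeps us inside $V_j$. Third, the coefficient of $x_1\cdots x_k$ is computed directly: for any polynomial $P$ the coefficient of $x_1\cdots x_k$ in $x_A(P)$ equals $k!$ times that in $P$ (each $v\in S_A$ fixes $x_1\cdots x_k$), while the coefficient in $x_B(\be_j)$ counts the $w\in S_B$ for which $w^{-1}(x_1\cdots x_k)$ is one of the $2^{k-j}$ monomials of $\be_j$ divisible by $x_1\cdots x_j$ (all of which occur with coefficient $+1$); evaluating this count gives the normalising scalar, and (i) then follows from the characterisation.

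For (ii) I would use (i) to write $\pnkj$ as a positive scalar times $\Psi_j=x_Ax_B\be_j$, and reduce the positivity to Theorem~\ref{thm:canbas}(iii), which says every positive $k$-root is a nonnegative integer combination of elements of $\bnk$. The first step is that $x_B\be_j=\sum_{w\in S_B}w(\be_j)$ is a sum of \emph{positive} $k$-roots: each $w\in S_B$ fixes $1,\dots,j$ and carries $\{k+1,\dots,k+j\}$ into $B\subseteq\{j+1,\dots,n\}$, so every antisymmetric factor $(x_i-x_{k+i})$ of $\be_j$ (with $i\le j$) is sent to $(x_i-x_{w(k+i)})$ with $i<w(k+i)$, and Lemma~\ref{lem:staypos} then forces $w(\be_j)$ to be positive. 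Grouping this sum by the $S_B$-orbit of $\be_j$ rewrites it as an integer multiple of a sum $\Sigma$ of \emph{distinct} positive $k$-roots, and---using that $S_{\{j+1,\dots,k\}}=S_{A\cap B}$ is a subgroup of $S_A$ fixing $\Sigma$---decomposing $x_A$ along its cosets rewrites $\Psi_j$ as an explicit integer multiple of $\sum_t t(\Sigma)$, the integer being a product of $(n-2k)!$, a power of two, and a factorial, arising respectively from the unused indices of $\be_j$, the sign-flips of its symmetric factors, and the $(k-j)!$-fold redundancies of the two symmetrisations along $A\cap B$. Matching this integer against the normalising scalar leaves exactly the factor $1/N$, so (ii) reduces to showing that $\sum_t t(\Sigma)$ is $\bnk$-positive with nonnegative integer coefficients.

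The step I expect to be the main obstacle is exactly this: that applying the outer symmetriser $x_A$ to the positive-$k$-root sum $\Sigma$ again yields a nonnegative combination of positive $k$-roots. This cannot be done term by term---an individual $v(\gamma)$ with $v\in S_A$ and $\gamma$ in the $S_B$-orbit of $\be_j$ may well be a \emph{negative} $k$-root, precisely when $v$ reverses an antisymmetric factor of $\gamma$ whose larger endpoint lies in the overlap $A\cap B=\{j+1,\dots,k\}$---so one must exhibit a cancellation across the whole sum. The plan is to exploit the rigid shape of the orbit elements $\gamma$ (each has its $j$ antisymmetric factors rooted exactly at $1,\dots,j$, with all remaining structure supported on $\{j+1,\dots,n\}$) together with the reduction relations of Proposition~\ref{prop:reduction}: after expanding each $v(\gamma)$ into the canonical basis by those relations, one shows that every negative canonical-basis contribution produced this way is matched by an equal positive contribution coming from another pair $(v',\gamma')$ in the sum. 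Once this is established, Theorem~\ref{thm:canbas}(iii) gives nonnegative integer $\bnk$-coefficients for $\sum_t t(\Sigma)$, hence for $N\cdot\pnkj$, and $\pnkj$ is $\bnk$-positive with the claimed denominators, proving (ii).
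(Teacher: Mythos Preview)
Your outline for (i) is essentially the paper's proof: the $K$-invariance and the location in $V_j$ are argued exactly as you say, and the coefficient of $x_1\cdots x_k$ is computed in an equivalent way (the paper first passes to coset representatives $X_B$ of the stabiliser $U\le S_B$ of the symmetric-factor product, then counts the terms that contribute).

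For (ii), however, you have correctly identified the obstacle but not its resolution, and the resolution you sketch---matching negative canonical-basis contributions against positive ones via the reduction relations of Proposition~\ref{prop:reduction}---is both vague and unnecessary. The paper avoids any such cancellation argument. Having passed to the coset representatives $w\in X_B$, the paper partitions the positive $k$-roots $w\cdot\be_j$ into three types according to how their factors sit relative to $A=\{1,\ldots,k\}$. The problematic terms are precisely those of \emph{Type~1}: those possessing an antisymmetric factor $(x_p-x_q)$ with both $p,q\in A$ (equivalently, exactly the terms for which some $v\in S_A$ can reverse an antisymmetric factor). For such a term $\al$ one factors $x_A=x'(1+(p,q))$ by summing over left coset representatives of $\langle(p,q)\rangle$ in $S_A$; since $(1+(p,q))\cdot\al=0$, the entire symmetriser $x_A$ annihilates $\al$. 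Every remaining term (Types~2 and~3) has no antisymmetric factor with both endpoints in $A$, so Lemma~\ref{lem:staypos} guarantees that $v\cdot\al$ is a \emph{positive} $k$-root for every $v\in S_A$. Hence $\be'_j:=\big(\sum_{v\in S_A}v\big)\big(\sum_{w\in X_B}w\big)\cdot\be_j$ is already a nonnegative integer combination of positive $k$-roots, and Theorem~\ref{thm:canbas}(iii) finishes the positivity. The additional factor of $(k-j)!$ is then extracted, just as you propose, from the $S_{A\cap B}$-invariance of $\sum_{w\in X_B}w\cdot\be_j$.
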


\begin{proof}
Let $\psnkj$ be the polynomial given in the formula; we will show that $\psnkj$ is equal to the $j$-th spherical function,
$\pnkj$. Any permutation in $S_A$ will fix $\psnkj$, 
because we have already symmetrized over $S_A$. Any permutation of $\{k+1, k+2, \ldots, n\}$ will commute with each element of
$S_A$, and be equal to an element of $S_B$, so these too will fix $\psnkj$. It follows that $\psnkj$ is fixed by the subgroup
$K = S_k \times S_{n-k}$.

We next prove that $\psnkj$ lies in the unique irreducible submodule $V_j$ of $\vnk$ with character $\chi^{(n-j, j)}$. 
The $k$-root $\be_j$ lies in $\vtnk{n}{k}{k-j}$ by Lemma \ref{lem:long} (ii), and $\vtnk{n}{k}{k-j}$ 
has character $\sum_{i = j}^k \chi^{(n-i, i)}$ by Lemma \ref{lem:long} (iv).
It is enough to show that $\sum_{w \in S_B} w \cdot \be_j$ lies in $V_j$.
Since $B$ has cardinality $n-j$, it follows from Lemma \ref{lem:long} (i) that 
$\sum_{w \in S_B} w$ will annihilate every submodule whose character is in the set $$
\{\chi^{(n-i, i)} : j \leq i \leq k\}
$$ except the one with character $\chi^{(n-j, j)}$. It follows that
$\sum_{w \in S_B} w \cdot \be_j$ lies in $V_j$, as required.

In order to complete the proof of (i), it remains to show that $x_1 x_2 \ldots x_k$ appears in $\psnkj$ with coefficient $1$.
Observe that the polynomial $\prod_{i=j+1}^k (x_i + x_{k+i})$ is stabilized by a subgroup $U \leq S_B$, generated
by transpositions $(i, k+i)$ that fix each factor, together with permutations of the $k-j$ factors. It follows that $U$ has order
$|U|=2^{k-j}(k-j)!$. By summing over a set of left coset representatives, $X_B$, for the left cosets $S_B/U$ of $U$ in $S_B$, we 
can obtain an expression for $\psnkj$ that is equivalent to the one in the statement but has fewer terms, as follows: $$
\psnkj = \frac{(n-2k)!}{k!(n-k)!} \left( \sum_{v \in S_A} v \right) \left( \sum_{w \in X_B} w \right) \cdot \be_j
.$$

Note that every antisymmetric factor of $\be_j$ contains precisely one index from the set $\{1, 2, \ldots, j\}$, 
and these indices are fixed pointwise by every element in $S_B$. Lemma \ref{lem:staypos} shows that
the $k$-roots $\{w \cdot \be_j : w \in X_B\}$ are all positive, and because we are summing over cosets of the stabilizer, 
each element of $X_B$ gives a different positive $k$-root $w \cdot \be_j$. It is convenient to separate the $k$-roots 
$w \cdot \be_j$ into three mutually exclusive types.

\begin{itemize}
\item[Type 1:]{positive $k$-roots containing at least one antisymmetric factor $(x_p - x_q)$ where $1 \leq p < q \leq k$;} 
\item[Type 2:]{positive $k$-roots that are not of type 1, but that contain least one symmetric factor $(x_p + x_q)$ where 
either $1 \leq p < q \leq k$ or $k+1 \leq p < q \leq n$;} 
\item[Type 3:]{positive $k$-roots where each factor contains precisely one $x_p$ where $1 \leq p \leq k$.} 
\end{itemize}

The $k$-roots of type 1 are annihilated by elements of the form $1 + w$ where $w$ is the transposition $(p, q)$. As in the
proof of Lemma \ref{lem:long} (iii), it follows that $k$-roots of type $1$ are annihilated by
$\sum_{v \in S_A} v$, and thus that they make no net contribution to the sum. These terms may be ignored from now on.

The $k$-roots of types 2 and 3 contain no antisymmetric factors $(x_p - x_q)$ with $1 \leq p < q \leq k$. If $\al$ is a $k$-root
of type 2 or 3 and $v \in S_A$, it follows by Lemma \ref{lem:staypos} that $v \cdot \al$ is also a positive $k$-root.

If $\al$ has type 2, then the $k$-root $v \cdot \al$ will have a factor $(x_{v(p)} + x_{v(q)})$ with two indices in the range 
$1 \leq v(p), v(q) \leq k$ or in the range $k +1 \leq v(p), v(q) \leq n$, and this means that 
$x_1 x_2 \cdots x_k$ appears in $v \cdot \al$ with coefficient zero.

A $k$-root $\al$ is of type $3$ if and only if it has the form $$
\al = \prod_{i = 1}^j (x_i - x_{\iota(i)}) \prod_{i=j+1}^k (x_i + x_{\iota(i)}) 
$$ for some injective function $\iota : \{1, 2, \ldots, k\} \ra \{k+1, k+2, \ldots, n\}$. The monomial $x_1 x_2 \cdots x_k$ 
appears in each such $k$-root with coefficient $1$, and the number of $k$-roots of type $3$ is the same as the number of
functions $\iota$, which is $(n-k)_k = (n-k)!/(n-2k)!$. The action of a permutation $v \in S_A$ leaves invariant the coefficient
of $x_1 x_2 \cdots x_k$, which implies that the coefficient of $x_1 x_2 \cdots x_k$ in 
$\left( \sum_{v \in S_A} v \right) \left( \sum_{w \in X_B} w \right) \cdot \be_j$ is $k!(n-k)!/(n-2k)!$, proving (i).

The above argument has also shown that $\be'_j := \left( \sum_{v \in S_A} v \right) \left( \sum_{w \in X_B} w \right) \cdot \be_j$ 
is a sum of positive $k$-roots: the terms 
of type 1 all cancel, and the terms of types 2 and 3 lead to sums of positive $k$-roots. Theorem \ref{thm:canbas} (iii) implies 
that $\be'_j$ is a linear combination of canonical basis elements with nonnegative integer coefficients. Now let $C = A \cap B$,
so that $|C|=k-j$, and let $X_A$ be a set of left coset representatives of $S_C$ in $S_A$. The left $S_B$-invariance of
$\sum_{w \in X_B} w \cdot \be_j$ then implies that $$
\be'_j 
= \left( \sum_{v \in X_A} v \right) \left( \sum_{u \in S_C} u \left( \sum_{w \in X_B} w .\be_j \right) \right)
= (k-j)! \left( \sum_{v \in X_A} v \right) \left( \sum_{w \in X_B} w \right) \cdot \be_j
.$$ It follows that the coefficients of the canonical basis elements in $\be'_j$ are all integer multiples of $(k-j)!$,
and dividing by the factor of $k!(n-k)!/(n-2k)!$ from the previous paragraph then proves (ii).
\end{proof}

\begin{rmk}
The bound on the denominator given in Theorem \ref{thm:main} (ii) is sharp in some cases, such as the case $n=4$ and $k=j=1$ in
which the denominator in the theorem is the best possible denominator of $1/3$.
\end{rmk}

Finally, we consider the problem of expressing elements of $\mnk$ as linear combinations of the canonical basis $\bnk$.
The basis $\bnk$ only has one element that is a positive linear combination of the natural basis of monomials, namely the basis
element whose label is $1^n$. However, it often happens that a squarefree monomial can be written as a positive
linear combination of the $\bnk$. The following definition is helpful for understanding this.

\begin{defn}
Let $x_I = x_{i_1} x_{i_2} \cdots x_{i_k}$ be a squarefree monomial of degree $k$ in the indeterminates $x_1, \ldots x_n$. Define
the {\it label}, $\mu(x_I)$ of $x_I$ to be the sequence of length $n$ in the alphabet $\{1, 2\}$ with the property that $\mu(x_I)_j = 2$ if
and only if $x_j$ appears in $x_I$. We say that $\mu(x_I)$ is a {\it reverse lattice word} if every terminal segment of $\mu(x_I)$
contains at least as many $1$s as $2$s.
\end{defn}

\begin{theorem}\label{thm:revlat}
Let $n \geq 2$ and $0 \leq k \leq n/2$ be integers, and let $x_I = x_{i_1} x_{i_2} \cdots x_{i_k}$ be a squarefree monomial of 
degree $k$ in the indeterminates $x_1, \ldots x_n$. If the label $\mu(x_I)$ is a reverse lattice word, then $x_I$ is $\bnk$-positive,
with coefficients that are nonnegative integer multiples of $1/2^k$.
\end{theorem}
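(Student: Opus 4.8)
The plan is to reduce the statement to Lemma~\ref{lem:span}~(i) and Theorem~\ref{thm:canbas}~(iii), using the reverse lattice hypothesis only to arrange the bookkeeping so that every $k$-root that appears is positive. Recall that the proof of Lemma~\ref{lem:span}~(i) expands $2^k x_I$ as a sum of $2^k$ $k$-roots: one picks a set $\{j_1,\dots,j_k\}$ of indices disjoint from the support $\{i_1,\dots,i_k\}$ of $x_I$, pairs $i_r$ with $j_r$, and substitutes $(x_{i_r}+x_{j_r})+(x_{i_r}-x_{j_r})$ for $2x_{i_r}$, obtaining $2^k x_I = \sum_{\ep\in\{\pm 1\}^k}\prod_{r=1}^k(x_{i_r}+\ep_r x_{j_r})$. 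By Lemma~\ref{lem:normform}, \emph{every} one of these $2^k$ $k$-roots is positive provided $i_r<j_r$ for all $r$, since each symmetric factor is harmless and each antisymmetric factor $(x_{i_r}-x_{j_r})$ is then already in normal form. So the only thing to arrange is that the complementary indices $j_r$ can be chosen to the right of their partners $i_r$.

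This is exactly where the hypothesis enters. Identify $\mu(x_I)$ with the word having a $2$ at each used index of $x_I$ and a $1$ at each unused index; by assumption every suffix of this word has at least as many $1$s as $2$s. Read the word from right to left, maintaining a stack: push the position of each $1$ encountered, and when a $2$ is encountered pop a position off the stack and match it to that $2$. The suffix inequality is precisely the statement that the stack never underflows, so this procedure defines an injection $\iota$ from the used indices to the unused indices with $\iota(i)>i$ for every used index $i$ (the matched $1$ lies to the right of the $2$ in the original order); this is the reverse-lattice mirror of the parenthesis matching discussed after Lemma~\ref{lem:htzero}. Taking $j_r:=\iota(i_r)$, the set $\{i_1,\dots,i_k,j_1,\dots,j_k\}$ has $2k$ distinct elements and satisfies $i_r<j_r$ for all $r$.

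With this choice the displayed expansion writes $2^k x_I$ as a sum of $2^k$ positive $k$-roots. Applying Theorem~\ref{thm:canbas}~(iii) to each summand expresses $2^k x_I$ as a nonnegative-integer combination of elements of $\bnk$, and dividing by $2^k$ shows $x_I$ is $\bnk$-positive with coefficients that are nonnegative integer multiples of $1/2^k$, as claimed.

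I expect the matching step to be the only real point of substance: the content of the theorem is that ``reverse lattice word'' is precisely the combinatorial condition guaranteeing that the expansion in Lemma~\ref{lem:span}~(i) can be performed with all complementary indices lying above the support of $x_I$, which is what keeps every resulting $k$-root positive. (There are $n-2k\ge 0$ indices left unmatched by $\iota$, which causes no problem, and the case $k=0$ is trivial.) Everything after the matching is a direct citation of results already established.
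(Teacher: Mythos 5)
Your proposal is correct and follows essentially the same route as the paper: both arguments construct an injection from the used indices to unused indices lying to their right (the paper simply asserts its existence from the reverse-lattice hypothesis, while you supply the explicit stack-based matching), expand $2^k x_I$ via $2x_{i_r}=(x_{i_r}-x_{j_r})+(x_{i_r}+x_{j_r})$ into $2^k$ positive $k$-roots in normal form, and then invoke Theorem~\ref{thm:canbas}~(iii).
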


\begin{proof}
Suppose that $x_I$ satisfies the hypotheses in the statement. It is enough to prove that $2^k x_I$ is $\bnk$-positive with integer
coefficients.

The hypothesis that $\mu(x_I)$ is a reverse lattice word means that there is an injective function 
$f : \{i_1, i_2, \ldots, i_k\} \rightarrow \{1, 2, \ldots, n\}$ with the properties that for all $1 \leq r \leq k$, both
(a) $f(i_r) > i_r$ and (b) $\mu(x_I)$ has a $1$ at position $f(i_r)$. By making the substitutions $$
2x_{i_r} \ra (x_{i_r} - x_{f(i_r)}) + (x_{i_r} + x_{f(i_r)})
,$$ we can express $2^k x_I = (2x_{i_1})(2x_{i_2}) \cdots (2x_{i_k})$ as an sum of $2^k$ positive $k$-roots in normal form. 
The result now follows from Theorem \ref{thm:canbas} (iii).
\end{proof}

\begin{rmk}
The number of reverse lattice words with $k$ occurrences of $2$ is $\binom{n}{k} - \binom{n}{k-1}$. It therefore follows that all but
at most $\binom{n}{k-1}$ elements of the monomial basis are $\bnk$-positive. 
If $k$ is small compared to $n$, then the hypotheses of Theorem \ref{thm:revlat} will usually be satisfied, but if $k$ is close to
$n/2$, the hypotheses will rarely be satisfied. The monomial $x_1 x_2 \cdots x_k$, whose label is $2^k 1^{n-k}$, will always satisfy 
the hypotheses.

It would be interesting to know whether the necessary condition in the theorem is also sufficient. This is the case when $k=1$, where
$x_n$ is the only monomial that is not $\bcnk{n}{1}$-positive.
\end{rmk}

\section{Concluding remarks}

\subsection{Other Gelfand pairs}\label{subsec:ogp}

A natural question is whether the results of this paper have analogues for other Gelfand pairs. This happens, for example,
in the case of the Gelfand pair $(S_{2n}, S_n \wr (\Z/2\Z))$, which corresponds to the action of
$S_{2n}$ on the size $n$ subsets of $\{1, 2, \ldots, 2n\}$, where each subset is identified with its complement. 
In this case, the coset space of the Gelfand pair has a canonical basis induced by the 
elements of $\bcnk{2n}{n}$ that have an even number of asymmetric factors.

An example of a Gelfand pair that has a simpler treatment
than the one in this paper is $((\Z/2\Z) \wr S_n, S_n)$. In this case, the $2^n$ cosets correspond to the weights
of the spin representation of a simple Lie algebra of type $B_n$ \cite[Proposition 6.4.5]{green13}. There is a well-known action 
of the Weyl group $W(B_n) \cong (\Z/2\Z) \wr S_n$ on
$2n$ symbols $\{ 1, \overline{1}, 2, \overline{2}, \ldots, n, \overline{n}\}$ \cite[Example 1.4.5]{green13}. This induces an 
action by signed permutations on the span of the $2^n$ linearly independent polynomials in the $2n$ commuting indeterminates
$x_1, x_{\overline{1}}, \ldots, x_n, x_{\overline{n}}$ of the form $$
(x_1 \pm x_{\overline{1}})(x_2 \pm x_{\overline{2}}) \cdots (x_n \pm x_{\overline{n}})
,$$ where the signs are chosen independently. This is a basis for the permutation module on the cosets that is compatible with
the direct sum decomposition into $W(B_n)$-irreducibles, and the corresponding spherical functions are given by taking the average of
each $S_n$-orbit of basis elements.

The results of this paper can also be thought of in terms of averaging operators.  
It follows from Theorem \ref{thm:main} that when the spherical functions of $(S_n, S_k \times S_{n-k})$ are written as linear 
combinations of canonical basis elements, the denominators of the coefficients
divide the order of $K$, namely $k!(n-k)!$. Using this, one can replace the expression in Theorem \ref{thm:main} (i) by an averaging 
operator over $K$ acting on a sum with far fewer terms. This suggests that there may be continuous versions of these results in which 
the averaging operator is replaced by a suitable integral.

\subsection{Sign-coherence}

Because the set $\cnk$ of $k$-roots is permuted by the action of any permutation $w$, it follows from Theorem
\ref{thm:canbas} (iii) that the matrix $\rho(w)$ representing $w$ with respect to $\bnk$ is an integer valued column sign-coherent 
matrix. The property of {\it column sign-coherence} comes from the theory of cluster algebras 
(\cite[Definition 2.2 (i)]{cao19}, \cite[Definition 6.12]{fomin07}, \cite[\S5]{gx3}), 
and means that any two nonzero entries in the same column of $\rho(w)$ have the same sign. Each simple $S_n$-module 
$\vtnk{n}{k}{t}/\vtnk{n}{k}{t-1}$ inherits a basis from $\bnk$ that also has the sign-coherence property.
This sign-coherence property is remarkable because it fails easily
for irreducible $S_n$-modules corresponding to partitions with more than two rows; for example, the irreducible module for 
$S_4$ with character $\chi^{(2,1,1)}$ contains a counterexample. The monomial basis for $\vnk$ and the basis mentioned in 
\ref{subsec:ogp} both have the sign-coherence property, but in the trivial sense that the matrices representing group 
elements have only one nonzero entry per column.

\subsection{Differential operators}

Some of the results of this paper say something about the differential operators $d : \vcnk{n}{k} \rightarrow
\vcnk{n}{k-1}$ given by $d = \sum_{i=1}^n \partial/\partial x_i$. It follows from the definitions that $d$ sends
positive $k$-roots to linear combinations of positive $k$-roots with positive even integer coefficients. 
Theorem \ref{thm:canbas} (iii) then implies that the entries of the matrix of $d$ relative to $\bcnk{n}{k}$ and 
$\bcnk{n}{k-1}$ are positive even integers. The submodules $\vtnk{n}{k}{t}$ of Definition \ref{def:vtnk} can be 
simply characterized as the kernels of the composite operators  $d^{t+1}$, as in \cite[Theorem 6.1.6 (v)]{ceccherini08}.

\subsection{Categorification}

The appearance of $\bnk$-positivity in various contexts in this paper raises the question of whether the positive integers and rational 
numbers that arise have combinatorial interpretations. A related question is whether $k$-roots can be categorified, and the connection 
with Kazhdan--Lusztig bases mentioned in Remark \ref{rmk:kl} is an additional hint that this may be possible. It seems likely that the
reduction rules in Proposition \ref{prop:reduction} would play an important role in any such categorification.

\section*{Acknowledgements}
I am grateful to Nathan Lindzey and Nat Thiem for some helpful conversations,
and to Tianyuan Xu for making many helpful comments and suggestions on an
earlier version of this paper. I also thank the referees for their corrections
and feedback.

\section*{Data availability statement}

Data sharing not applicable to this article as no datasets were generated or analysed during the current study.

\section*{Conflict of interest statement}

On behalf of all authors, the corresponding author states that there is no conflict of interest. 

\bibliographystyle{plain}
\bibliography{ppsf.bib}

\end{document}